\documentclass{article}



     \usepackage[preprint,nonatbib]{neurips_2020}



\usepackage[utf8]{inputenc} 
\usepackage[T1]{fontenc}    
\usepackage{hyperref}       
\usepackage{url}            
\usepackage{booktabs}       
\usepackage{amsfonts}       
\usepackage{nicefrac}       
\usepackage{microtype}      
\usepackage{amsmath}
\usepackage{amsthm}
\usepackage{amssymb}
\usepackage{mathrsfs}
\usepackage{bbm}
\usepackage{todonotes}
\newtheorem{theorem}{Theorem}
\newtheorem{lemma}{Lemma}

\newtheorem{definition}{Definition}
\newtheorem{proposition}{Proposition}

\newtheorem{remark}{Remark}

\allowdisplaybreaks
\usepackage{comment}

\usepackage{algorithm}
\usepackage{algpseudocode}

\usepackage{float}
\makeatletter
\newenvironment{breakablealgorithm}
  {
   \begin{center}
     \refstepcounter{algorithm}
     \hrule height.8pt depth0pt \kern2pt
     \renewcommand{\caption}[2][\relax]{
       {\raggedright\textbf{\ALG@name~\thealgorithm} ##2\par}%
       \ifx\relax##1\relax 
         \addcontentsline{loa}{algorithm}{\protect\numberline{\thealgorithm}##2}%
       \else 
         \addcontentsline{loa}{algorithm}{\protect\numberline{\thealgorithm}##1}%
       \fi
       \kern2pt\hrule\kern2pt
     }
  }{
     \kern2pt\hrule\relax
   \end{center}
  }
\makeatother
\algnewcommand\algorithmicforeach{\textbf{for each}}
\algdef{S}[FOR]{ForEach}[1]{\algorithmicforeach\ #1\ \algorithmicdo}

\title{Estimation of Monotone Multi-Index Models}

%

\author{%
   David Gamarnik \thanks{http://web.mit.edu/gamarnik/www/home.html} \\
  Sloan School of Management\\
  Massachusetts Institute of Technology \\
  Cambridge, MA 02139\\
  \texttt{gamarnik@mit.edu} \\
  \And
   Julia Gaudio\thanks{http://web.mit.edu/jgaudio/www/index.html} \\
  Department of Mathematics\\
  Massachusetts Institute of Technology\\
  Cambridge, MA 02139 \\
  \texttt{jgaudio@mit.edu} \\
}
\begin{document}

\maketitle

\begin{abstract}
In a multi-index model with $k$ index vectors, the input variables are transformed by taking inner products with the index vectors. A transfer function $f: \mathbb{R}^k \to \mathbb{R}$ is applied to these inner products to generate the output. Thus, multi-index models are a generalization of linear models. In this paper, we consider \emph{monotone} multi-index models. Namely, the transfer function is assumed to be coordinate-wise monotone. The monotone multi-index model therefore generalizes both linear regression and isotonic regression, which is the estimation of a coordinate-wise monotone function. We consider the case of nonnegative index vectors. We provide an algorithm based on integer programming for the estimation of monotone multi-index models, and provide guarantees on the $L_2$ loss of the estimated function relative to the ground truth.
\end{abstract}

\section{Introduction}
Let $\beta$ be a $d \times k$ matrix, and let $f$ be a function from $\mathbb{R}^k$ to $\mathbb{R}$. The model $\mathbb{E}[Y|X] = f(\beta^T X)$ is known as a \emph{multi-index model}. The columns of $\beta$ are referred to as the \emph{index vectors} and $f$ is called a \emph{transfer function}. Therefore, multi-index models generalize linear models. Typically, $f$ is assumed to lie in a particular function class. In this paper, we assume that $f$ is coordinate-wise monotone and satisfies a mild Lipschitz condition. We treat the case where the components of $X$ are i.i.d., and the entries of $\beta$ are nonnegative.

Supposing that the index vectors were known, the estimation of the function $f$ would reduce to \emph{isotonic regression}, which is the problem of estimating an unknown coordinate-wise monotone function. Monotone multi-index models (MMI) thereby additionally generalize isotonic regression. The setting where the transfer function is known is called the \emph{Generalized Index Model}, a widely applicable statistical model \cite{Dobson2018}. We are therefore considering a much more challenging model. 

We consider a high-dimensional setting, namely the dimension $d$ is possibly much larger than the sample size, $n$. We solve a sparse high-dimensional model; we assume that the number of index vectors (columns of $\beta$) is constant, and that $\beta$ has a constant number $s$ of nonzero rows. Finally, we assume that $\beta$ is a nonnegative matrix, which is natural in many applications. For example, consider the following finance application. Suppose there are $k$ future time periods, and $d$ products. Let $\beta(i,j)$ be the predicted monetary value of owning one unit of product $i$ at a time $j$. Given a vector $x$ of product quantities, the value $\beta^T x$ is a $k$-dimensional vector indicating the value of the products over the $k$ time periods. Let $f$ be a time-discounted measure of the overall value of the goods. Taking the example further, row sparsity would model an inventory restriction where one can store only $s$ distinct types of goods.

Work on index models has largely focused on the single-index model ($k=1$) (e.g. \cite{Hardle1993}, \cite{Horowitz1996}, \cite{Ichimura1991},\cite{Kalai2009}, \cite{Kakade2011}). In particular, \cite{Kalai2009} provides the first provably efficient estimation algorithm for estimation of single-index models under monotonicity and Lipschitz assumptions. This work is further improved by \cite{Kakade2011}. 
To our knowledge, our paper is the first work done on estimation of multi-index models under the monotone Lipschitz model. 

\subsection{Notation}
Let $x$ be a vector in $\mathbb{R}^d$. The vector $p$-norm $\Vert x \Vert_p$ is defined as $\Vert x \Vert_p^p \triangleq \sum_{i=1}^d x_i^p.$
The $\infty$-norm is defined as $\Vert x \Vert_{\infty} \triangleq \max_{i \in [d]} |x_i|$. Let $\mathcal{M}_{d,k}(r)$ be the set of $d \times k$ matrices with each column having $2$-norm at most $r$. Similarly, let $\overline{\mathcal{M}}_{d,k}(r)$ be the set of $d \times k$ matrices with each column having $2$-norm \emph{equal} to $r$. Let $\mathcal{O}_{d,k}$ be the set of $d \times k$ orthonormal matrices. Let $\mathcal{P}_k$ denote the set of $k \times k$ rotation matrices, i.e. $\mathcal{P}_k = \{ P \in \mathbb{R}^{k \times k} : P^T = P^{-1},~ \det(P) = 1\}$.

For a $d \times k$ matrix $M$, let $M^+_{ij} \triangleq \max \{M_{ij}, 0\},$
for $i \in [d]$ and $j \in [k]$, i.e. $M^+$ is the matrix formed from $M$ by replacing each negative entry by $0$. 
Similarly, for a vector $x \in \mathbb{R}^k$, let $x^+$ denote the positive part of $x$, i.e. $x^+_i = \max\{x_i, 0\}$. For a matrix $M \in \mathbb{R}^{d \times k}$ and $I \subseteq [d]$, let 
\begin{align*}
M(I)_{ij} = \begin{cases} 
M_{ij} & i \in I\\
0 & i \not \in I.
\end{cases}
\end{align*}
In other words, the matrix $M(I)$ is formed from $M$ by zeroing all rows with index not belonging to $I$.
Similarly, for a vector $x \in \mathbb{R}^d$, let $x(I)_i = \mathbbm{1}_{i \in I} x_i$. Note that $(M(I))^T x = M^T (x(I))$. 

Let $\Vert \cdot \Vert_p$ also denote the component-wise $p$-norm of a matrix, i.e. for a $d \times k$ matrix $M$, we have $\Vert M \Vert_p^p = \sum_{i=1}^d \sum_{j=1}^k M_{ij}^p.$ 
The Frobenius norm $\Vert M \Vert_F$ is equal to $\Vert M \Vert_2$ under this notation. 

We say a function $f: \mathbb{R}^k \to \mathbb{R}$ is \emph{$l$-Lipschitz} if for every $x, y \in \mathbb{R}^k$ it holds that 
\[|f(x) - f(y)| \leq l \Vert x - y\Vert_2.\]
We say that $f:\mathbb{R}^k \to \mathbb{R}$ is \emph{coordinate-wise monotone} if for all $x, y \in \mathbb{R}^k$ with $x_i \leq y_i$ for each coordinate $i$, it holds that $f(x) \leq f(y)$. In other words, $f$ is coordinate-wise monotone if it is monotone with respect to the Euclidean partial order. Fix $b > 0$. Let $\mathcal{C}(b)$ be the set of coordinate-wise monotone functions $f : \mathbb{R}^k \to [0,b]$, and let $\mathcal{L}_1(b)$ be the set of $1$-Lipschitz coordinate-wise monotone functions $f : \mathbb{R}^k \to [0,b]$. Note that $\mathcal{L}_1(b) \subset \mathcal{C}(b)$.

For a matrix $\beta$ and function $f$, write $(f \circ \beta)(x) \triangleq f(\beta^T x)$. Finally, let $L(x,y,f) \triangleq (f(x) - y)^2$ be the loss function we consider.

\subsection{The Model}\label{section:model}
We now describe the model. Some of the assumptions are carried from \cite{Yang2017}. All parameters except the dimension $d$ are considered constant. 

Let $\beta^{\star} \in \overline{\mathcal{M}}_{d,k}(r)$ be a $d \times k$ matrix of rank $k$, where each column has $2$-norm equal to $r$. Assume that $\beta^{\star}$ is $s^{\star}$-row sparse, meaning that $\beta^{\star}$ has at most $s^{\star}$ nonzero rows. Let $I^{\star} \subset[d]$ be the set of non-zero rows of $\beta^{\star}$, so that $\beta^{\star}(I^{\star}) = \beta^{\star}$. Since $\beta^{\star}$ has full column rank, we can write $\beta^{\star} = Q^{\star} R^{\star}$ as its $QR$ decomposition, where $Q^{\star} \in \mathcal{O}_{d,k}$ and $R^{\star} \in \overline{\mathcal{M}}_{k,k}(r)$ is invertible. We further assume that $\beta^{\star} \geq 0$ entrywise. 

Let $p_0$ be a twice-differentiable density supported on $\mathcal{X} \subset{R}$.
Let $p^{\star} = \max_{x \in \mathbb{R}} p_0(x)$. Further assume that $\mathcal{X} \subseteq [-C, C]$. Let $X \in \mathbb{R}^d$ be a random variable with density $f_X(x) = \prod_{i=1}^d p_0(x_i)$. 
We additionally assume that $\mathbb{E}[X] = 0$. This is without loss of generality, as we could treat the random variable $X - \mathbb{E}[X] $ with support contained in the set $[-2C, 2C]^d$.

Let $s_0(x) = \frac{p_0'(x)}{p_0(x)}$ for $x \in \mathcal{X}$. Let $f^{\star}\in \mathcal{L}_1(b)$ be a twice-differentiable function. We assume that $\mathbb{E}\left[ \nabla^2 f^{\star}(\beta^{\star T} X) \right] \succ 0$, a restriction that ensures that estimation of $\beta^{\star}$ is information-theoretically feasible \cite{Yang2017}. Let $\rho_0$ be the smallest eigenvalue of $\mathbb{E} \left[ \nabla^2 f^{\star}(\beta^{\star T} X)\right]$. Note that since $\beta^{\star}$ has a constant number of columns and a constant number of nonzero rows, the value $\rho_0$ is itself a constant.

The model is
\begin{align}
Y = (f^{\star} \circ \beta^{\star})(X) + Z. \label{eq:model}
\end{align}
Here $Z$ is independent from $X$ and satisfies $\mathbb{E}[Z] = 0$. We assume that $|Z| \leq \eta$ almost surely so that $Y \in \mathcal{Y} \triangleq [-\eta, b + \eta]$ almost surely. Let $F(x,y)$ denote the joint density of $X$ and $Y$. We make a mild distribution assumption, which is that there exists $\theta$ such that $\mathbb{E}[s_0(X)^6] \leq \theta$ and $\mathbb{E}[Y^6] \leq \theta$. Note that since $Y \in [-\eta, b + \eta]$, then $Y^6 \leq (b + \eta)^6$ almost surely.

Given i.i.d. samples $(X_1, Y_1), \dots, (X_n, Y_n)$ drawn from the model \eqref{eq:model}, our goal is to estimate the function $f^{\star} \circ \beta^{\star}$, which is an element of the function class
\[\overline{\mathcal{F}}_{d,k} \triangleq \left\{f \circ \beta(I) : f \in \mathcal{L}_1(b), I \subset [d], |I| = s^{\star}, \beta(I) \in \overline{\mathcal{M}}_{d,k}(r)\right\}.\] 
\begin{proposition}\label{proposition:scaling}
Let $\mathcal{F}_{d,k} \triangleq \left\{f \circ \beta(I) : f \in \mathcal{L}_1(b), I \subset [d], |I| = s^{\star}, \beta(I) \in \mathcal{M}_{d,k}(r)\right\}.$ It holds that
$\mathcal{F}_{d,k} = \overline{\mathcal{F}}_{d,k}$.
\end{proposition}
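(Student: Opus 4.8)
The plan is to show the two function classes are equal by double inclusion, and the only nontrivial direction is $\mathcal{F}_{d,k} \subseteq \overline{\mathcal{F}}_{d,k}$, since the reverse inclusion $\overline{\mathcal{F}}_{d,k} \subseteq \mathcal{F}_{d,k}$ is immediate from the fact that $\overline{\mathcal{M}}_{d,k}(r) \subseteq \mathcal{M}_{d,k}(r)$ (a column of $2$-norm exactly $r$ trivially has $2$-norm at most $r$).

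For the forward inclusion, take any $g = f \circ \beta(I) \in \mathcal{F}_{d,k}$ with $f \in \mathcal{L}_1(b)$ and $\beta(I) \in \mathcal{M}_{d,k}(r)$. The idea is to absorb the ``shrinkage'' of each column of $\beta$ into a rescaling of the corresponding input coordinate of $f$. Concretely, write $c_j = \Vert \beta(I)_{\cdot j} \Vert_2 \le r$ for the $2$-norm of the $j$-th column; I would first handle the case $c_j > 0$ for all $j$, and set $\widetilde{\beta}_{\cdot j} = (r/c_j)\, \beta(I)_{\cdot j}$, so that $\widetilde{\beta} = \beta(I) D$ where $D = \mathrm{diag}(r/c_1, \dots, r/c_k)$ has entries $\ge 1$. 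Then $\widetilde{\beta} \in \overline{\mathcal{M}}_{d,k}(r)$, it has the same set of nonzero rows as $\beta(I)$ (so it still lies in the $s^{\star}$-sparse family, i.e. $\widetilde{\beta} = \widetilde{\beta}(I)$), and $\beta(I)^T x = D^{-1} \widetilde{\beta}^T x$. Defining $\widetilde{f}(u) \triangleq f(D^{-1} u)$, we get $g(x) = f(\beta(I)^T x) = f(D^{-1}\widetilde{\beta}^T x) = \widetilde{f}(\widetilde{\beta}^T x) = (\widetilde{f} \circ \widetilde{\beta})(x)$. It remains to check $\widetilde{f} \in \mathcal{L}_1(b)$: the range is unchanged, so $\widetilde{f} : \mathbb{R}^k \to [0,b]$; coordinate-wise monotonicity is preserved because $D^{-1}$ has nonnegative diagonal entries, so $u \le v$ coordinate-wise implies $D^{-1}u \le D^{-1}v$ coordinate-wise; and $\widetilde{f}$ is $1$-Lipschitz because $\Vert D^{-1}u - D^{-1}v\Vert_2 \le \Vert D^{-1}\Vert_{\mathrm{op}} \Vert u - v\Vert_2 = \max_j (c_j/r) \Vert u-v\Vert_2 \le \Vert u - v\Vert_2$. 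Hence $g \in \overline{\mathcal{F}}_{d,k}$.

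The one technical wrinkle is the degenerate case where some column norm $c_j = 0$, i.e. $\beta(I)$ has a zero column. Then there is no way to rescale that column up to norm $r$ while keeping the composed function literally unchanged, but $f \circ \beta(I)$ does not depend on the $j$-th coordinate of its argument at all, so I would instead define $\widetilde{\beta}_{\cdot j}$ to be any fixed vector of norm $r$ supported on $I$ (this is possible since $|I| = s^{\star} \ge 1$), and define $\widetilde{f}$ to ignore coordinate $j$ (formally, $\widetilde{f}(u) = f(D^{-1}u')$ where $u'$ agrees with $u$ on the nonzero-norm coordinates and is set to any constant, e.g.\ the relevant entry of $\beta(I)^T x$ being identically $0$, on the zero-norm coordinates); monotonicity and the Lipschitz bound in the remaining coordinates go through as before, and constancy in the extra coordinate only helps both properties. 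This makes $\widetilde{\beta}(I) \in \overline{\mathcal{M}}_{d,k}(r)$ and $\widetilde{f}\circ\widetilde{\beta} = g$, completing the proof.

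The main obstacle is purely bookkeeping: verifying that the rescaled transfer function $\widetilde{f}$ genuinely stays inside $\mathcal{L}_1(b)$ — in particular that the Lipschitz constant does not blow up (it does not, precisely because we are scaling the argument down, $D^{-1}$ being a contraction) and that coordinate-wise monotonicity survives composition with the diagonal matrix (it does, because the diagonal entries are nonnegative). Once those two checks are in place the equality of the classes follows immediately.
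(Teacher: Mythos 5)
Your proof is correct and is essentially the paper's own argument: rescale each column of $\beta(I)$ up to norm $r$ and compensate by pre-composing $f$ with the inverse scaling, then check that the new transfer function stays in $\mathcal{L}_1(b)$ because the inverse scaling is a coordinate-wise nonnegative contraction. The only cosmetic difference is that you do all $k$ columns at once via the diagonal matrix $D$ rather than one at a time. One genuine improvement on your side: the paper's proof silently assumes each column norm $t$ is strictly positive before dividing by it, whereas $\mathcal{M}_{d,k}(r)$ permits zero columns; you correctly notice and resolve this edge case by replacing a zero column with an arbitrary norm-$r$ column supported on $I$ and making $\widetilde{f}$ constant in that coordinate, which preserves both monotonicity and the Lipschitz bound.
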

By Proposition \ref{proposition:scaling}, the model captures $\beta^{\star} \in \mathcal{M}_{k,k}(r) \subset \overline{\mathcal{M}}_{k,k}(r) $ as well. Observe that for $l > 0$,
\[f \circ \beta \equiv  f(lx) \circ \frac{\beta}{l}.\]
By this identity, the assumption that $f^{\star}$ is $1$-Lipschitz and $\beta^{\star}$ has columns of norm $r$ is without loss of generality; the assumption is equivalent to the assumption that $f^{\star}$ is $l$-Lipschitz and $\beta^{\star}$ has columns of norm $\nicefrac{r}{l}$.

\subsection{Outline}
We combine the results of two recent papers in order to design an algorithm for estimation in MMI models. \cite{Yang2017} provide an algorithm for estimation of $Q^{\star}$ up to rotation given samples from the model \eqref{eq:model}. In other words, they find $Q$ such that $QP \approx Q^{\star}$ for some rotation matrix $P$. In Section \ref{section:estimation-Q-star}, we summarize the approach of \cite{Yang2017} to estimate the matrix $Q^{\star}$, up to rotation.

Informally, observe that if $QP \approx Q^{\star}$ and $R \approx PR^{\star}$, then $QR \approx Q^{\star} R^{\star}$. Given a $Q$ that approximates $Q^{\star}$ up to rotation, it remains to find $R \in \overline{\mathcal{M}}_{k \times k}(r)$, an index set $I$ of cardinality $s^{\star}$, as well as a function $f$.  Thus, the estimation of $Q^{\star}$ up to rotation reduces the high-dimensional estimation problem to a lower-dimensional problem. 

Our approach is to form a collection of candidate $k \times k$ matrices (Section \ref{section:near-net}). For each candidate matrix, we find the optimal index set and accompanying coordinate-wise monotone function (Section \ref{section:sparse}).  We call the problem of finding the optimal index set $I$ and coordinate-wise monotone function $f$ the Sparse Matrix Isotonic Regression Problem. We extend the recent work of \cite{Gamarnik2019}, who consider a related isotonic regression problem.

In Section \ref{section:main}, we tie together the results of the previous three sections in order to provide an algorithm for estimation in the high-dimensional monotone multi-index model. The algorithm finds a function of the form $f \circ (QR)^+(I)$ minimizing the sample loss over the candidate matrices $R$. Here $I$ is an index set and $f$ is a coordinate-wise monotone function obtained by solving the Sparse Matrix Isotonic Regression Problem. We give estimation guarantees for our algorithm in terms of $L_2$ loss. Finally, Section \ref{section:conclusion} outlines some future directions.


\subsection{Contributions}
Let $\Vert f^{\star} \circ Q^{\star} R^{\star} -g \Vert_2$ denote the expected $L_2$ loss of a function $g$ with respect to the ground truth:
\[\Vert f^{\star} \circ Q^{\star} R^{\star} - g \Vert_2^2 \triangleq \int_{x \in \mathcal{X}} \left[ (f^{\star} \circ Q^{\star} R^{\star})(x) - g(x) \right]^2 f_X(x) dx .\]
Let $z(\epsilon_1, \epsilon_2, C) \triangleq 2 \eta C \sqrt{k} \left(\epsilon_1 + \epsilon_2 r \right) + C^2 k \left(\epsilon_1 + \epsilon_2 r \right)^2.$ The main result of our paper is the following.
\begin{theorem}\label{theorem:sample-main-result}
Fix $\epsilon> 0$. Let $\delta = \delta(\epsilon)$ be the solution to $z(\delta, \delta, C) = \nicefrac{\epsilon}{2}$. 
Suppose $d \geq \sqrt{\frac{3}{\epsilon}}$. Given $n$ independent samples $(X_i, Y_i)_{i=1}^n$ from the model \eqref{eq:model}, there exists an algorithm that produces an estimator $ f_n \circ M_n^+(I_n)$ such that
\begin{align*}
\mathbb{P}\left( \Vert f_n \circ M_n^+(I_n) - f^{\star} \circ Q^{\star}R^{\star} \Vert_2^2 \geq \epsilon \right) \leq \epsilon.
\end{align*}
whenever $n \geq C_1 \log(d) + C_2$, for constants $C_1$ and $C_2$ depending on $C$, $b$, $s^{\star}$, $p^{\star}$, $k$, $\rho_0$, $\theta$, and $\eta$.
\end{theorem}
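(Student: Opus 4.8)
The plan is to chain together the three modular results the paper has set up — estimation of $Q^\star$ up to rotation (Section~\ref{section:estimation-Q-star}), construction of a finite net of candidate $k\times k$ matrices (Section~\ref{section:near-net}), and solution of the Sparse Matrix Isotonic Regression Problem (Section~\ref{section:sparse}) — and then convert an empirical-loss guarantee into the claimed population $L_2$ bound via a uniform convergence argument over the function class $\overline{\mathcal{F}}_{d,k}$. First I would state the algorithm explicitly: (i) run the \cite{Yang2017} procedure on the $n$ samples to obtain $\widehat Q$ with $\widehat Q P \approx Q^\star$ for some unknown rotation $P$; (ii) build the net $\mathcal{N}$ of candidate matrices $R$ of the appropriate resolution so that some $R \in \mathcal{N}$ has $R \approx P R^\star$; (iii) for each $R \in \mathcal{N}$, form $M = \widehat Q R$, take the positive part $M^+$, and solve the Sparse Matrix Isotonic Regression Problem to get an index set $I$ of size $s^\star$ and a coordinate-wise monotone $f$ minimizing the empirical loss $\frac1n\sum_i L(M^+(I)^T X_i, Y_i, f)$; (iv) output the triple $(f_n, M_n, I_n)$ achieving the smallest empirical loss over the net.

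The key approximation step is to show that the output has small empirical loss. Since $f^\star \circ Q^\star R^\star \in \overline{\mathcal{F}}_{d,k}$ (which after the positive-part operation is harmless because $\beta^\star \ge 0$, so $(Q^\star R^\star)^+$ restricted to $I^\star$ agrees with $\beta^\star$ on the relevant coordinates up to the approximation error), there is a candidate $(R, I^\star, f^\star)$ whose composed function $f^\star \circ (\widehat Q R)^+(I^\star)$ is pointwise within the quantity $z(\epsilon_1,\epsilon_2,C)$ of the ground truth on $\mathcal{X}$: this is exactly the role of the function $z$ — $\epsilon_1$ captures the $\widehat Q$-estimation error, $\epsilon_2$ captures the net resolution, and the Lipschitz property of $f^\star$ together with $\|X\|_\infty \le C$ translates the matrix perturbation $\|(\widehat Q R)^+(I^\star) - Q^\star R^\star\|$ into the stated pointwise bound (the $2\eta C\sqrt k(\epsilon_1+\epsilon_2 r)$ term being the cross term with the noise and the $C^2 k(\epsilon_1+\epsilon_2 r)^2$ the quadratic term). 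Choosing $\delta$ so that $z(\delta,\delta,C) = \epsilon/2$ then guarantees a candidate with population loss at most $\epsilon/2$, hence empirical loss at most $\epsilon/2$ plus a concentration fluctuation, so the minimizer over the net also has small empirical loss.

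Next I would run uniform convergence in the reverse direction: the output $f_n \circ M_n^+(I_n)$ lies in (a slight enlargement of) $\overline{\mathcal{F}}_{d,k}$, a class of bounded functions with controlled metric entropy — the entropy of $1$-Lipschitz coordinate-wise monotone functions on a bounded domain is finite for fixed $k$, the entropy of $k\times k$ matrices of bounded norm is $O(k^2 \log(1/\cdot))$, and the choice of index set contributes $\binom{d}{s^\star} \le d^{s^\star}$ possibilities, giving a $\log d$ term. A standard symmetrization / chaining bound (using the sixth-moment assumptions $\mathbb{E}[s_0(X)^6],\mathbb{E}[Y^6] \le \theta$ to control the tails and the boundedness $Y \in [-\eta, b+\eta]$) shows that with probability $\ge 1-\epsilon$, $\sup_{g \in \overline{\mathcal{F}}_{d,k}} | (\text{empirical loss}) - (\text{population loss}) | \le \epsilon/4$ once $n \ge C_1 \log d + C_2$. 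Combining: population loss of the output $\le$ empirical loss of output $+ \epsilon/4 \le$ empirical loss of the good candidate $+ \epsilon/4 \le$ population loss of good candidate $+ \epsilon/2 \le \epsilon$, on the intersection of the good events (each of probability $\ge 1 - \epsilon/\text{const}$, so a union bound gives the final $\le \epsilon$; the hypothesis $d \ge \sqrt{3/\epsilon}$ absorbs the $O(1/d^2)$-type failure probability from the net/rotation-approximation step). The main obstacle I anticipate is bookkeeping the interaction between the positive-part operation $M \mapsto M^+$ and the rotation ambiguity in $\widehat Q$: one must argue that taking $(\widehat Q R)^+$ with $R$ ranging over the net still recovers something close to $\beta^\star = (\beta^\star)^+$, which requires that the net be built in the rotated frame and that $\beta^\star \ge 0$ be used crucially — getting the error propagation through $(\cdot)^+$ (which is $1$-Lipschitz, so this should go through) and verifying the resulting function is still coordinate-wise monotone composed with a nonnegative matrix is where the care is needed.
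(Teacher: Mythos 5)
Your high-level plan matches the paper's: estimate $Q^\star$ up to rotation, build a random near-net $\mathcal{R}$, solve sparse matrix isotonic regression for each net element, then combine an approximation step (some candidate in the net is population-close to $f^\star \circ Q^\star R^\star$) with a uniform-convergence step, and finally use $d \geq \sqrt{3/\epsilon}$ to absorb the $1/d^2$ failure probability from Theorem~\ref{theorem:yang}. Your three-way loss decomposition and the identification of the role of $z(\epsilon_1,\epsilon_2,C)$ are exactly what the paper does, and you correctly flag the interaction between $(\cdot)^+$ and the rotation ambiguity as the subtle bookkeeping point that Lemma~\ref{lemma:net-sensitivity-main} handles (using $\beta^\star \geq 0$ so that $(\beta^\star)^+ = \beta^\star$ and $\Vert M^+ - \beta^\star \Vert_F \leq \Vert M - \beta^\star \Vert_F$).

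There is, however, a genuine gap in your uniform-convergence step. You describe the algorithm correctly as returning a coordinate-wise monotone $f$, but then say the output lies in an enlargement of $\overline{\mathcal{F}}_{d,k}$ and invoke the ``metric entropy of $1$-Lipschitz coordinate-wise monotone functions on a bounded domain.'' Algorithm~\ref{alg:integer-program} deliberately does \emph{not} enforce Lipschitzness (the paper points out that doing so makes the integer program nonlinear), so the estimator $f_n$ lies only in $\mathcal{C}(b)$, and the class $\mathcal{G}(Q_n,\mathcal{R})$ over which the paper proves uniform convergence consists of \emph{monotone, not Lipschitz}, functions composed with a finite family of fixed matrices. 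The sup-norm metric entropy of bounded coordinate-wise monotone functions on a bounded domain is infinite, so the entropy argument you sketch does not go through. The paper instead bounds the expected covering number of \emph{loss vectors} on the sample (Definition~\ref{definition:net}, Haussler's inequality in Lemma~\ref{lemma:sample-accuracy-main-helper}) via \emph{expected labeling numbers} of the projected points $\{M(I)^T X_i\}$ (Lemma~\ref{lemma:net-bound} and Lemma~\ref{lemma:labeling-number-bound}); this relies crucially on the bounded-density assumption $p_0 \leq p^\star$ (so that the random points cannot clump on an antichain) and on the matrix nonnegativity (Proposition~\ref{proposition:labelling-contraction}) to push the $k$-dimensional labeling problem down to the $s^\star$-dimensional coordinates. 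That is what produces the $\exp(c\, n^{(s^\star-1)/s^\star})$ covering-number growth dominated by the $\exp(-c'\epsilon_0^2 n)$ deviation term, and hence the claimed $n \geq C_1\log d + C_2$ sample complexity. A minor side issue: you attribute the tail control to the sixth-moment bounds $\mathbb{E}[s_0(X)^6],\mathbb{E}[Y^6] \leq \theta$, but those moments only enter Theorem~\ref{theorem:yang}; the uniform-convergence step uses boundedness of $Y$ and of $\mathcal{C}(b)$ together with the covering-number inequality.
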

The significance of this result is that we can estimate the ground truth function with a high degree of accuracy even when the dimension $d$ is much larger than the number of samples $n$. The proofs are deferred to the supplementary material, with the exception of the proof of our key result, Theorem \ref{theorem:main-result}, that immediately implies Theorem \ref{theorem:sample-main-result}.

\section{Estimation of $Q^{\star}$}\label{section:estimation-Q-star}
We summarize the work of \cite{Yang2017}, who estimate $Q^{\star}$ up to rotation. The approach of \cite{Yang2017} uses the second-order Stein condition. For $x \in \mathbb{R}^d$, let $T(x)$ be the $d \times d$ matrix defined as follows.
\begin{align*}
T(x)_{ij} &= \begin{cases}
s_0(x_i) s_0(x_j) & i \neq j\\
s_0(x_i)^2 - s_0'(i) & i = j.
\end{cases}
\end{align*}
\cite{Yang2017} show the identity $\mathbb{E}\left[ Y \cdot T(X) \right] = Q^{\star} D_0 Q^{\star}$, where $D_0 = \mathbb{E}\left[ \nabla^2 f(\beta^{\star T} X) \right]$. Therefore, one can estimate $Q^{\star}$ from the leading eigenvectors of the sample average of the quantity $Y \cdot T(X)$. \cite{Yang2017} use a robust estimator for $Y \cdot T(X)$. For $\tau > 0$, define the truncated random variables
\[\tilde{Y}_i \triangleq \text{sign}(Y_i) \cdot \min \{|Y_i|, \tau\} \text{ and } \tilde{T}_{jk}(X_i) \triangleq \text{sign}\left( T_{jk}(X_i)\right) \cdot \min \{ \left|T_{jk}(X_i)\right|, \tau^2\}.\] 
The robust estimator is given by
\[\tilde{\Sigma} = \tilde{\Sigma}(\tau) \triangleq \frac{1}{n} \sum_{i=1}^n \tilde{Y}_i \cdot \tilde{T}(X_i).\]
\cite{Yang2017} propose the following approach to estimate $Q^{\star}$ up to rotation.
\begin{breakablealgorithm}
\caption{Estimation of $Q^{\star}$ \cite{Yang2017}}\label{alg:SDP}
\begin{algorithmic}[1]
\Require{Values $(X_1, Y_1), \dots, (X_n, Y_n)$, $\tau > 0$, and $\lambda >0$}
\Ensure{$\hat{Q} \in \mathcal{O}_{d,k}$}
\State Compute the estimator $\tilde{\Sigma}(\tau)$ using the samples $(X_i, Y_i)_{i=1}^n$.
\State Solve the following optimization problem.
\begin{align}
\max ~~ &Tr(W^T~\tilde{\Sigma}(\tau))  + \lambda \Vert W \Vert_1 \label{eq:SDP-1}\\
\text{s.t.} ~~ &0 \preceq W \preceq I_d \label{eq:SDP-2}\\
&Tr(W) = k \label{eq:SDP-3}.
\end{align}
\State Let $\hat{Q}$ be the matrix whose columns are the $k$ leading eigenvectors of $\hat{W}$.
\end{algorithmic}
\end{breakablealgorithm}

\begin{theorem}[Adapted from Theorem 3.3 from \cite{Yang2017}]\label{theorem:yang}
Let $\tau = \left( \frac{3 \theta n}{2 \log d} \right)^{\frac{1}{6}}$ and $\lambda = 10 \sqrt{\theta \frac{\log d}{n}}$. Under the assumptions of Section \ref{section:model}, With probability at least $1 - d^{-2}$, Algorithm \ref{alg:SDP} applied to samples $(X_i, Y_i)_{i=1}^n$, $\tau$, and $\lambda$ produces an estimator $\hat{Q}$ satisfying
\[ \inf_{P \in \mathcal{P}_k} \Vert \hat{Q}P - Q^{\star} \Vert_F \leq \frac{1}{\rho_0} 4 \sqrt{2} s^{\star} \lambda.\]
\end{theorem}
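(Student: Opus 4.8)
The plan is to carry out, in our setting, the Fantope sparse-PCA analysis underlying Theorem~3.3 of \cite{Yang2017} (in the line of Vu--Cho--Lei--Rohe); since the statement is adapted from that theorem, the only real work is to check that the assumptions of Section~\ref{section:model} supply its two inputs — a population identity with a spectral gap, and an entrywise deviation bound for $\tilde\Sigma(\tau)$ — and to track the constants so that $\tfrac{1}{\rho_0}4\sqrt 2\,s^{\star}\lambda$ emerges. First I would fix the population structure. By the second-order Stein identity, $\Sigma^{\star} \triangleq \mathbb{E}[Y\,T(X)] = Q^{\star} D_0 Q^{\star T}$ with $D_0 = \mathbb{E}[\nabla^2 f^{\star}(\beta^{\star T}X)] \succ 0$ having smallest eigenvalue $\rho_0$; since $Q^{\star}$ has orthonormal columns, $\Sigma^{\star}$ is symmetric PSD of rank exactly $k$, with leading $k$-dimensional invariant subspace $\operatorname{col}(Q^{\star})$ and eigengap $\lambda_k(\Sigma^{\star}) - \lambda_{k+1}(\Sigma^{\star}) = \rho_0 - 0 = \rho_0$. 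The oracle $\Pi^{\star} \triangleq Q^{\star}Q^{\star T}$ is feasible for \eqref{eq:SDP-2}--\eqref{eq:SDP-3}; moreover, because $Q^{\star} = Q^{\star}(I^{\star})$ with $|I^{\star}| = s^{\star}$, the matrix $\Pi^{\star}$ is supported on the $s^{\star}\times s^{\star}$ block $S \triangleq I^{\star}\times I^{\star}$, which is exactly the structure that the $\ell_1$ term in \eqref{eq:SDP-1} is meant to exploit.

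The deterministic half is standard. Let $\hat W$ be an optimizer of \eqref{eq:SDP-1}--\eqref{eq:SDP-3} and $\Delta \triangleq \hat W - \Pi^{\star}$, and write $\langle A,B\rangle$ for the trace inner product. Optimality of $\hat W$ and feasibility of $\Pi^{\star}$ give a basic inequality bounding $\langle \tilde\Sigma(\tau),\Delta\rangle$ below by a multiple of $\lambda$ times a difference of $\ell_1$ norms. Splitting $\tilde\Sigma(\tau) = \Sigma^{\star} + (\tilde\Sigma(\tau)-\Sigma^{\star})$, using H\"older to get $\langle \tilde\Sigma(\tau)-\Sigma^{\star},\Delta\rangle \le \|\tilde\Sigma(\tau)-\Sigma^{\star}\|_{\infty}\,\|\Delta\|_1$, invoking the Fantope curvature inequality $\langle \Sigma^{\star},\Pi^{\star}-\hat W\rangle \ge \tfrac{\rho_0}{2}\|\Delta\|_F^2$ (valid because $\Pi^{\star}$ is the top-$k$ eigenprojection of $\Sigma^{\star}$, whose eigengap is $\rho_0$), and decomposing $\Delta$ across $S$ and $S^{c}$ to extract a cone condition $\|\Delta_{S^{c}}\|_1 \lesssim \|\Delta_S\|_1$ (this is where the event $\|\tilde\Sigma(\tau)-\Sigma^{\star}\|_{\infty} \le \lambda/2$ is used), one obtains $\|\Delta\|_F \lesssim s^{\star}\lambda/\rho_0$, with explicit constant since $\|\Delta_S\|_1 \le s^{\star}\|\Delta_S\|_F$ (as $|S| = s^{\star 2}$). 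Finally $\hat Q$ collects the $k$ leading eigenvectors of $\hat W$, so a standard Davis--Kahan / Fantope-rounding step — pass from $\hat W$ to its top-$k$ eigenprojection, then from projections to orthonormal factors, replacing $\hat Q$ by $\hat Q\,\mathrm{diag}(1,\dots,1,-1)$ if the optimal orthogonal matrix has determinant $-1$ — upgrades this to $\inf_{P\in\mathcal P_k}\|\hat QP - Q^{\star}\|_F \le \tfrac{1}{\rho_0}4\sqrt 2\,s^{\star}\lambda$, the numerical constant being assembled exactly as in \cite{Yang2017}.

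The crux — and the step I expect to be the main obstacle — is the entrywise deviation bound $\|\tilde\Sigma(\tau)-\Sigma^{\star}\|_{\infty} \le 5\sqrt{\theta\log d/n} = \lambda/2$ holding with probability at least $1 - d^{-2}$, which is what forces the particular choices $\tau = (3\theta n/2\log d)^{1/6}$ and $\lambda = 10\sqrt{\theta\log d/n}$. I would prove it entry by entry and union bound over the $\le d^{2}$ entries. For fixed $(j,k)$, write the deviation of $\tfrac1n\sum_i \tilde Y_i\tilde T_{jk}(X_i)$ from $\Sigma^{\star}_{jk}$ as (i) a deterministic truncation bias $\mathbb{E}[\tilde Y\tilde T_{jk}(X)] - \mathbb{E}[Y T_{jk}(X)] = \mathbb{E}[(\tilde Y - Y)\tilde T_{jk}] + \mathbb{E}[Y(\tilde T_{jk} - T_{jk})]$, controlled at order $\tau^{-3}$ by Markov's inequality applied to $\mathbb{E}[Y^6]\le\theta$ and to the (finite) third moments of the entries of $T(X)$, each of which is a bounded-degree polynomial in the $s_0(X_i)$ and $s_0'(X_i)$ whose low-order moments are finite under Section~\ref{section:model}; and (ii) the fluctuation $\tfrac1n\sum_i(\tilde Y_i\tilde T_{jk}(X_i) - \mathbb{E}[\tilde Y\tilde T_{jk}(X)])$ of bounded, independent summands, which Bernstein's inequality controls using the almost-sure bound $|\tilde Y_i\tilde T_{jk}(X_i)| \le \tau\cdot\tau^2 = \tau^3$ and the variance proxy $\mathbb{E}[(YT_{jk}(X))^2] \le \theta$ (by H\"older on the sixth-moment hypotheses for $Y$ and $s_0(X)$). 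The choice of $\tau$ makes $\tau^3 = \sqrt{3\theta n/(2\log d)}$, which simultaneously drives the bias (i) down to order $\sqrt{\theta\log d/n}$ and makes the Bernstein linear term $\tau^3\log d/n$ of the same order, so the union bound over the entries produces the stated $\ell_\infty$ bound with total constant at most $5$ and failure probability $d^{-2}$. This is precisely the robust-estimation lemma behind Theorem~3.3 of \cite{Yang2017}; the only genuinely new bookkeeping here is tracking the row-sparsity level $s^{\star}$, which enters solely through the support size of $\Pi^{\star}$ in the cone step above.
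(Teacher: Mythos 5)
The paper itself contains no proof of Theorem~\ref{theorem:yang}: the result is imported verbatim from Theorem~3.3 of \cite{Yang2017}, with the paper's only contribution being the observation that the model assumptions of Section~\ref{section:model} (second-order Stein identity, the spectral gap $\rho_0 > 0$, the sixth-moment bounds on $Y$ and $s_0(X)$, and row sparsity $s^\star$) instantiate the hypotheses of that theorem. Your sketch is a faithful reconstruction of the Fantope sparse-PCA analysis that \cite{Yang2017} actually carry out --- population identity $\mathbb{E}[Y\,T(X)] = Q^{\star}D_0 Q^{\star T}$ with eigengap $\rho_0$, basic inequality combined with the curvature bound and a cone condition on the support $I^\star \times I^\star$, an entrywise truncation-plus-Bernstein bound of order $\sqrt{\theta \log d / n}$ dictating the choices of $\tau$ and $\lambda$, and a Davis--Kahan step to pass from $\|\hat W - \Pi^\star\|_F$ to $\inf_{P\in\mathcal{P}_k}\|\hat Q P - Q^\star\|_F$ --- so it is consistent with the cited source rather than a genuinely different route, and I see no gap in the outline. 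The one thing worth flagging is that you correctly wrote the population identity with the transpose ($Q^{\star}D_0 Q^{\star T}$), silently fixing what appears to be a typo ($Q^\star D_0 Q^\star$) in the paper's summary of \cite{Yang2017}.
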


Assuming that $d$ grows with $n$, Theorem \ref{theorem:yang} shows that with high probability as $n \to \infty$, the estimate of $Q^{\star}$ is correct up to rotation, with error on the order of $\sqrt{\nicefrac{\log (d)}{n}}$. 

\section{Construction of a Near-Net}\label{section:near-net}
Fix $\delta > 0$. We construct a random set of matrices $\mathcal{R}$ that will serve to approximate the set of $k \times k$ matrices with column norm $r$, with respect to the Frobenius norm. Given $\epsilon, \delta > 0$, we choose the cardinality of the set of approximating matrices so that a fixed matrix from the set $\overline{\mathcal{M}}_{k,k}(r)$ is $\epsilon$-close to some element of $\mathcal{R}$ with probability $1- \delta$.
For this reason, we call $\mathcal{R}$ a \emph{near-net}. To construct $\mathcal{R}$, we first construct a random set of vectors $\mathcal{R}_0$ by choosing $N_0$ vectors from the uniform measure of all vectors of $2$-norm $r$. In other words, each element from $\mathcal{R}_0$ is chosen from the uniform measure on the $k$-dimensional sphere of radius $r$, denoted by $S_{r}^{k}$. We may sample uniformly using $k$ independent random variables $Z_1, \dots, Z_{k} \sim \mathcal{N}(0,1)$. The random vector 
\[\frac{1}{\sum_{i=1}^{k} Z_i^2} \left(Z_1, \dots, Z_{k}\right)\]
is uniformly distributed on the surface of $S_r^{k}$. Finally, we then construct the set $\mathcal{R}$ as the set of all matrices with columns belonging to $\mathcal{R}_0$. Then $|\mathcal{R}| = N_0^k$.

For a vector $x \in \mathbb{R}^k$ and $\epsilon > 0$, let $B(x, \epsilon) \triangleq \{y : \Vert x - y \Vert_{2} \leq \epsilon\} $ denote the ball of radius $\epsilon$ around $x$ with respect to the $2$-norm.
\begin{lemma}\label{lemma:near-net}
Let $\epsilon > 0$. Consider the near-net $\mathcal{R}(N_0)$ described above, and let $M \in \overline{\mathcal{M}}_{k,k}(r)$ be a fixed matrix. With probability at least
\begin{align}
1 - k\left(1-   \left| S_r^{k} \cap B\left(e_1, \frac{\epsilon}{\sqrt{k}}\right)\right| \left|S_r^{k} \right|^{-1} \right)^{N_0}, \label{eq:probability}
\end{align}
there exists $\overline{R} \in \mathcal{R}(N_0)$ such that $\Vert M - \overline{R}\Vert_{F} \leq \epsilon$, where $e_1 = (1, 0, \dots, 0) \in \mathbb{R}^k$ and $| A |$ denotes the measure of a set $A$.
\end{lemma}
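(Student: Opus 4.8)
The plan is to reduce the matrix statement to a statement about columns, and then to a coupon-collector–style bound. Since $M \in \overline{\mathcal{M}}_{k,k}(r)$ has $k$ columns, each lying on $S_r^k$, it suffices to approximate each column $M_j$ to within $\epsilon/\sqrt{k}$ in the $2$-norm by some element of $\mathcal{R}_0$; then stacking these columns yields $\overline{R} \in \mathcal{R}(N_0)$ with $\Vert M - \overline{R}\Vert_F^2 = \sum_{j=1}^k \Vert M_j - \overline{R}_j\Vert_2^2 \leq k \cdot (\epsilon/\sqrt{k})^2 = \epsilon^2$, as desired. So I would first state this decomposition explicitly.

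Next, I would bound the probability that a single fixed column $M_j$ fails to be $(\epsilon/\sqrt{k})$-approximated by $\mathcal{R}_0$. A single uniform sample $v$ on $S_r^k$ lands in $B(M_j, \epsilon/\sqrt{k})$ with probability exactly $|S_r^k \cap B(M_j, \epsilon/\sqrt{k})| \, |S_r^k|^{-1}$. By rotational invariance of the uniform measure on the sphere, this probability does not depend on which point $M_j$ is — it equals $|S_r^k \cap B(e_1, \epsilon/\sqrt{k})| \, |S_r^k|^{-1}$, which is where the $e_1$ in the statement comes from. Since $\mathcal{R}_0$ consists of $N_0$ i.i.d.\ such samples, the probability that \emph{none} of them lands in $B(M_j, \epsilon/\sqrt{k})$ is $\left(1 - |S_r^k \cap B(e_1, \epsilon/\sqrt{k})| \, |S_r^k|^{-1}\right)^{N_0}$.

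Finally, I would apply a union bound over the $k$ columns: the probability that \emph{some} column fails is at most $k\left(1 - |S_r^k \cap B(e_1, \epsilon/\sqrt{k})| \, |S_r^k|^{-1}\right)^{N_0}$, so the complementary event — every column is approximated, hence $M$ is approximated — has probability at least the quantity \eqref{eq:probability}.

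None of these steps presents a genuine obstacle; the argument is essentially bookkeeping. The one point requiring a little care is the rotational-invariance argument identifying the per-sample success probability with the quantity centered at $e_1$: one should note that for any $u, w \in S_r^k$ there is an orthogonal transformation taking $u$ to $w$, that such a transformation preserves the uniform measure on $S_r^k$, and that it maps $B(u,\epsilon/\sqrt{k}) \cap S_r^k$ onto $B(w,\epsilon/\sqrt{k}) \cap S_r^k$ (since orthogonal maps are isometries). This justifies replacing $M_j$ by $e_1$ uniformly across all columns. The remaining work — verifying that the union bound and the i.i.d.\ product structure combine as claimed — is routine.
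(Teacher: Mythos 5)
Your proposal is correct and follows essentially the same route as the paper: decompose column-wise via the Frobenius norm, bound the per-column failure probability by rotational invariance of the uniform measure on the sphere (this is the paper's Lemma \ref{lemma:near-net-helper}), and finish with a union bound over the $k$ columns. The only cosmetic difference is that you spell out the rotational-invariance step explicitly, whereas the paper asserts the equality of the two spherical-cap measures without comment.
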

\begin{remark}
As $N_0 \to \infty$, the probability \eqref{eq:probability} goes to $1$.
\end{remark}
Our random construction is simple to implement. While deterministic constructions are possible, they are much more complex (see \cite{Dumer2007}).

\section{Sparse Matrix Isotonic Regression}\label{section:sparse}
Recently, Gamarnik and Gaudio introduced the Sparse Isotonic Regression model  \cite{Gamarnik2019}.
We now introduce a new related model, \emph{Sparse Matrix Isotonic Regression}. We are given a $d \times k$ matrix $M$ with nonnegative entries as well as samples $(X_i, Y_i)_{i=1}^n$. For a given sparsity level $s \in \mathbb{N}$ and bound $b > 0$, our goal is to find a set $I \subset [d]$ with cardinality $s$, and a coordinate-wise monotone function $f : \mathbb{R}^k \to [0,b]$ minimizing $\sum_{i=1}^n \left( Y_i - (f \circ M(I))(X_i) \right)^2.$
Our approach is to estimate the function values at the points $X_1, \dots, X_n$ and interpolate. We emphasize that we do not require the function $f$ to be $1$-Lipschitz.

The Integer Programming Sparse Matrix Isotonic Regression algorithm finds the optimal index set and function values on a given set of points, given a matrix $M$ with nonnegative entries. Binary variables $v_l$ determine the index set $I$. The variables $F_i$ represent the estimated function values at data points $X_i$. Auxiliary variables $z_{ij}$ and $q_{ijp}$ are used to model the monotonicity constraints. The function that is returned is an interpolation of the points $(M(I)^T X_i, F_i)_{i=1}^n$.
 \begin{breakablealgorithm}
\caption{Integer Programming Matrix Isotonic Regression}\label{alg:integer-program}
\begin{algorithmic}[1]
\Require{Values $(X_1, Y_1), \dots, (X_n, Y_n)$, sparsity level $s$, $M \geq 0 \in \mathbb{R}^{d \times k}$, $C > 0$, $b > 0$}
\Ensure{An index set $I \subset [d]$ satisfying $|I| = s$; a coordinate-wise monotone function $f: \mathbb{R}^k \to [0,b]$}
\State Let $B = 2C \sum_{l=1}^d \sum_{p=1}^k M_{lp}$. Let $$\mu = \min\{M_{lp} > 0 : l \in [d], p \in [k]\} \cdot \min_{i, j \in [n], i \neq j} |X_{il} - X_{jl}|.$$
\State Solve the following optimization problem.
\small
\begin{align}
&\min_{v, F, z} \sum_{i=1}^n \left(Y_i - F_i\right)^2 \label{eq:objective}\\
\text{s.t. } &\sum_{l=1}^d v_l = s  \label{eq:sparsity}\\
& F_i - F_j \leq b z_{ij} \label{eq:ordering}\\
&\sum_{p=1}^k q_{ijp} \geq z_{ij} & \forall i,j \in [n] \label{eq:auxiliary}\\
&\sum_{l=1}^d v_l M_{lp} (X_{il} - X_{jl}) - \frac{\mu}{2} \geq -\left(B + \frac{\mu}{2}\right)(1- q_{ijp} ) &\forall i,j \in [n], p \in [k] \label{eq:monotonicity}\\
&v_l \in \{0,1\} &\forall l \in [d] \nonumber \\
&F_i \in [0,b] &\forall i \in [n]  \nonumber \\
& z_{ij} \in \{0,1\} & \forall i, j \in [n] \nonumber\\
& q_{ijp} \in \{0,1\} & \forall i, j \in [n], p \in [k] \nonumber
\end{align}
\normalsize
\State Let $I_n = \{ l \in [d] : v_l = 1\}$. Let $\hat{f}_n(x) = \max \{F_i : M(I_n)^TX_i \preceq x\}$ and $\hat{f}_n(x) = 0$ if $\{M(I_n)^TX_i \preceq x\}_{i=1}^n = \emptyset$
\State Return $(I_n, \hat{f}_n)$. 
\end{algorithmic}
\end{breakablealgorithm}
\begin{proposition}\label{proposition:optimal}
Suppose $X_i \in [-C, C]^d$ for $i \in [n]$. On input $(X_i, Y_i)_{i=1}^n, s, M, C, b$, Algorithm \ref{alg:integer-program} finds a function $\hat{f}_n \in \mathcal{C}(b)$ and index set $I_n$ that minimize the empirical loss $\sum_{i=1}^n L(X_i, Y_i, f \circ M(I))$, over functions $f \in \mathcal{C}(b)$ and index sets $I$ with cardinality $s$. 
\end{proposition}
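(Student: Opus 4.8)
The plan is to prove that, once the binary selection vector $v$ is fixed (equivalently, once the index set $I = \{l : v_l = 1\}$ is fixed), the feasible region of Algorithm \ref{alg:integer-program} projected onto the variables $(F_i)_{i=1}^n$ coincides \emph{exactly} with the set of value vectors that are realizable by a coordinate-wise monotone function on the points $M(I)^T X_1, \dots, M(I)^T X_n$; since the outer minimization ranges over all index sets $I$ with $|I| = s$ and the objective $\sum_i (Y_i - F_i)^2$ depends only on $F$, the proposition follows. Throughout write $a_i \triangleq M(I)^T X_i$, so $(a_i)_p = \sum_{l=1}^d v_l M_{lp} X_{il}$, and recall the standard fact that a vector $(F_i)$ with $F_i \in [0,b]$ arises as $(f(a_i))_{i=1}^n$ for some $f \in \mathcal{C}(b)$ if and only if $a_i \preceq a_j \Rightarrow F_i \le F_j$ for all $i,j$: the forward implication is immediate from monotonicity, and the reverse is witnessed by the interpolant $x \mapsto \max\{F_i : a_i \preceq x\}$ (with value $0$ on the empty set), which is precisely the function $\hat f_n$ constructed in Step 3.

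First I would establish soundness. Take any feasible $(v, F, z, q)$ and suppose $a_i \preceq a_j$, i.e. $\sum_l v_l M_{lp}(X_{il} - X_{jl}) \le 0$ for every $p$. Because $\mu > 0$, constraint \eqref{eq:monotonicity} forces $q_{ijp} = 0$ for each $p$ (if $q_{ijp} = 1$ its left-hand side would have to be at least $\mu/2 > 0$); then \eqref{eq:auxiliary} forces $z_{ij} = 0$, and \eqref{eq:ordering} gives $F_i \le F_j$. Consequently $\hat f_n$ is coordinate-wise monotone with range in $[0,b]$ (immediate from the "max" definition and $F_i \in [0,b]$), and $\hat f_n(a_j) = F_j$ for every $j$, since $j$ belongs to $\{i : a_i \preceq a_j\}$ and every other member $i$ of that set has $F_i \le F_j$. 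Hence $(\hat f_n, I_n)$ is an admissible pair whose empirical loss equals the optimal IP objective, so the IP optimum is at least the minimum over $(f,I)$.

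Next I would establish completeness. Fix any $f \in \mathcal{C}(b)$ and index set $I$ with $|I| = s$, and set $v_l = \mathbbm{1}_{l \in I}$, $F_i = f(a_i) \in [0,b]$, $q_{ijp} = \mathbbm{1}\{\sum_l v_l M_{lp}(X_{il} - X_{jl}) \ge \mu/2\}$, and $z_{ij} = \mathbbm{1}\{F_i > F_j\}$, then verify each constraint. Constraint \eqref{eq:sparsity} holds by construction; \eqref{eq:ordering} holds because $F_i, F_j \in [0,b]$; \eqref{eq:monotonicity} holds because when $q_{ijp}=0$ its right-hand side is $-(B+\mu/2)$ while its left-hand side is at least $-2C\sum_l M_{lp} - \mu/2 \ge -B-\mu/2$, using $B = 2C\sum_{l,p} M_{lp}$ and $|X_{il}-X_{jl}| \le 2C$; and when $q_{ijp}=1$ the left-hand side is nonnegative. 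The delicate constraint is \eqref{eq:auxiliary}: when $z_{ij} = 1$, i.e. $f(a_i) > f(a_j)$, coordinate-wise monotonicity of $f$ gives $a_i \not\preceq a_j$, so some coordinate $p$ has $\sum_l v_l M_{lp}(X_{il}-X_{jl}) > 0$; to conclude $q_{ijp} = 1$ one must argue this quantity is in fact at least $\mu/2$.

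The main obstacle, and the step I expect to require the most care, is the calibration of the "big-$M$" constants $B$ and $\mu$. The role of $B$ is routine — it need only exceed $2C\sum_l M_{lp}$ so that \eqref{eq:monotonicity} is vacuous whenever $q_{ijp}=0$. The subtle point is $\mu$: one needs there to be no "dead zone" of arbitrarily small positive gaps, i.e. whenever the $p$-th coordinates of $M(I)^T X_i$ and $M(I)^T X_j$ differ they differ by at least $\mu/2$, so that the indicator defining $q_{ijp}$ in the completeness construction truly captures strict coordinate dominance (and, symmetrically, so that equal coordinates cannot be forced apart in soundness, which uses $\mu > 0$). Deriving this gap bound from the definition of $\mu$ in terms of $\min\{M_{lp} > 0\}$ and the minimum inter-sample coordinate spacing — and noting that the samples being distinct in each coordinate guarantees $\mu > 0$ — is the crux of the proof; the remaining verifications are mechanical.
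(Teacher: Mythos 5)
Your argument mirrors the paper's: you establish soundness (any feasible $(v,F,z,q)$ yields a coordinate-wise monotone assignment realized by the returned interpolant) and attempt completeness (every monotone assignment is IP-feasible), the same two halves the paper's proof needs. The soundness direction is correct and is essentially the paper's contrapositive argument run forward. The genuine gap is the completeness direction, which you correctly flag as the crux but then defer with the claim that ``whenever the $p$-th coordinates of $M(I)^T X_i$ and $M(I)^T X_j$ differ they differ by at least $\mu/2$,'' asserting this follows from the paper's definition of $\mu$. It does not. The paper's $\mu$ lower-bounds the magnitude of any single nonzero summand $v_l M_{lp}(X_{il}-X_{jl})$, but the relevant quantity $\sum_l v_l M_{lp}(X_{il}-X_{jl})$ is a \emph{signed} sum whose positive and negative parts can nearly cancel. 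Concretely, with $s = 2$, two equal positive entries $M_{l_1 p} = M_{l_2 p}$, and $X_{il_1}-X_{jl_1} \approx 1$ while $X_{il_2}-X_{jl_2} \approx -1$, the $p$-th coordinate gap of $M(I)^T X_i - M(I)^T X_j$ is an arbitrarily small positive number while $\mu$ is of order one. In that case constraint \eqref{eq:monotonicity} forces $q_{ijp} = 0$ even though the $p$-th coordinate of $M(I)^T X_i$ strictly exceeds that of $M(I)^T X_j$; together with \eqref{eq:auxiliary} and \eqref{eq:ordering} this forces $F_i \leq F_j$ and can exclude the genuine loss-minimizing monotone assignment from the feasible region.

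So this is not merely an omitted verification: the gap bound you rely on is false as stated, and the completeness half of your proposal does not hold with the algorithm's current $\mu$. (The paper's own proof has the identical hole --- it dispatches this step with ``due to the choice of $\mu$'' and shows only the soundness direction, never verifying that the true minimizer is IP-feasible.) A correct $\mu$ would have to lower-bound the minimum strictly positive value of $\sum_l v_l M_{lp}(X_{il}-X_{jl})$ taken over all $i \neq j$, all $p \in [k]$, and all $v \in \{0,1\}^d$ with $\sum_l v_l = s$; that is not a product of two coordinate-wise minima, and your proof would need to supply such a bound, or amend the definition of $\mu$, for the completeness step to go through.
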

The integer program in Algorithm \ref{alg:integer-program} has a convex objective and linear constraints. While integer programming is NP-hard in general, modern solvers achieve excellent performance on such problems. We note that it is possible to ensure that the function $\hat{f}_n$ be $1$-Lipschitz in addition to coordinate-wise monotone, by modifying the optimization problem in Algorithm \ref{alg:integer-program}. However, the resulting optimization problem is an integer program with nonlinear constraints, a less tractable formulation. For further details, please see Section \ref{section:Lipschitz} in the supplementary material. 

\section{Estimation guarantees for the MMI Model}\label{section:main}
In this section, we provide estimation guarantees for the model $Y = (f^{\star} \circ Q^{\star} R^{\star})(X) + Z.$ Let $N \triangleq 2n$ be the sample size. We use $n$ samples for estimation of $Q^{\star}$ (up to rotation), obtaining a matrix $Q_n$, and another $n$ samples to obtain a matrix $R_n$ a function $f_n$, and index set $I_n$. The final result is an estimated function $f_n\circ (Q_nR_n)^+(I_n)$. 

We now outline the approach. First, by Theorem \ref{theorem:yang}, the matrix $Q_n$ obtained from the semidefinite programming approach satisfies $\Vert Q_nP_n - Q^{\star} \Vert_{F} \leq \frac{1}{\rho_0} 4 \sqrt{2} s^{\star} \lambda$ with probability at least $1 - d^{-2}$. We use this matrix $Q_n$ to estimate $R_n$, $f_n$ and $I_n$, assuming that $\Vert Q_nP_n - Q^{\star} \Vert_{F} \leq \frac{1}{\rho_0} 4 \sqrt{2} s^{\star} \lambda$ for some unknown rotation matrix $P_n$. 
The joint estimation of $(R_n, f_n, I_n)$ is intractable; instead, we create a net of candidate matrices from the set $\overline{\mathcal{M}}_{k,k}(r)$. For each net element $R$, we apply Algorithm \ref{alg:integer-program} to find the optimal pair $(f_R, I_R)$ minimizing the empirical loss $\sum_{i=n+1}^{2n} L(X_i, Y_i, f \circ (Q_n R)^+(I))$. Finally, we output the best combination over the net elements. 

Recall that $f^{\star} \circ \beta^{\star} \in \overline{\mathcal{F}}_{d,k}(r)$. While $Q_n \in \mathcal{O}_{d,k}$ and $R_n \in \overline{\mathcal{M}}_{k,k}(r)$,
the matrix $(Q_nR_n)^+(I_n)$ may not be an element of $\overline{\mathcal{M}}_{d,k}(r)$. Further, the estimated function $f_n$ may not be $1$-Lipschitz. Nevertheless, we are able to give an $L_2$ loss guarantee, as we will see in the proof of Theorem \ref{theorem:main-result}.


\begin{breakablealgorithm}
\caption{MMI Regression}\label{alg:full}
\begin{algorithmic}[1]
\Require{$N_0 \in \mathbb{N}$, values $(X_1, Y_1), \dots, (X_N, Y_N)$, $C > 0$, $b > 0$, $\tau > 0$, and $\lambda > 0$}
\Ensure{$f_n \in \mathcal{L}_1(b)$, $Q_n \in \mathcal{O}_{d,k}$, $\overline{R}_n \in \overline{\mathcal{M}}_{k,k}(r)$, and $I_n \in [d] : |I_n| = s^{\star}$}
\State Construct a random near-net $\mathcal{R}(N_0)$.
\State Produce an estimate $Q_n$ using Algorithm \ref{alg:SDP} applied to $(X_i, Y_i)_{i=1}^n$, $\tau$, and $\lambda$.
\ForEach {$R \in \mathcal{R}$} 
\State Apply Algorithm \ref{alg:integer-program} to input $(X_i, Y_i)_{i=n+1}^{2n}$, $s^{\star}$, $(Q_nR)^+$, $C$, and $b$, obtaining the function $f_R$ and index set $I_R$. 
\EndFor
\State Return the tuple $(f_R, Q_n, R, I_R)$ with the smallest empirical loss.
\end{algorithmic}
\end{breakablealgorithm}

The following result provides an upper bound on the error associated with the estimator from Algorithm \ref{alg:full}. Our main result, Theorem \ref{theorem:sample-main-result}, easily follows from Theorem \ref{theorem:main-result}.
\begin{theorem}\label{theorem:main-result}
Let $X \in \mathbb{R}^d$ be a random variable with independent entries of density $p_0 \leq p^{\star}$ and support contained within the set $[-C, C]^d$. Assume that $f^{\star} : \mathbb{R}^k \to [0,b]$ for $b > 0$. Fix $n$. Let $\tau = \left( \frac{3 \theta n}{2 \log d} \right)^{\frac{1}{6}}$ and $\lambda = 10 \sqrt{\theta \frac{\log d}{n}}$. Let $\epsilon > 0$ and let $\delta > 0$ be such that $\epsilon > z\left(\delta, \frac{1}{\rho_0} 4 \sqrt{2} s^{\star} \lambda, C \right).$
Let $(f_n, Q_n, \overline{R}_n, I_n)$ be the result of applying Algorithm \ref{alg:full} on inputs $N_0$, $(X_1, Y_1), \dots, (X_{2n}, Y_{2n})$, $C$, $b$, $\tau$, and $\lambda$. Let $M_n = Q_n \overline{R}_n$. Then
\small
\begin{align*}
\mathbb{P}\left(\Vert f_n \circ M_n^+(I_n) - f^{\star} \circ Q^{\star} R^{\star} \Vert_2^2 \geq \epsilon \right) &\leq k\left(1-   \left| S_r^{k-1} \cap B\left(e_1, \frac{\delta}{\sqrt{k}}\right)\right| \left|S_r^{k-1} \right|^{-1} \right)^{N_0}+ \frac{1}{d^2} \\
&~~+ 4\binom{d}{s^{\star}} N_0^k \exp \left[ \left( \frac{2 \log(2)b}{\alpha} +2^{\frac{b}{\alpha}} (4p^{\star}C)^{s^{\star}} \right) n^{\frac{s^{\star}-1}{s^{\star}}}  -\frac{\epsilon_0^2 n}{2^9 b^2} \right], 
\end{align*}
\normalsize
where $\epsilon_0 = \epsilon - z\left(\delta, \frac{1}{\rho_0} 4 \sqrt{2} s^{\star} \lambda , C \right)$ and $\alpha =  \frac{1}{64}\epsilon_0 (b+\eta)^{-1}$.
\end{theorem}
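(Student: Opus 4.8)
The plan is to decompose the error probability into three sources, matching the three terms in the stated bound: (i) the near-net failing to contain a good approximation to the ideal rotated matrix $PR^{\star}$; (ii) Algorithm~\ref{alg:SDP} failing to return a $Q_n$ close to $Q^{\star}$ up to rotation; and (iii) conditional on those two good events, the empirical-loss minimization over the net failing to control the population $L_2$ loss. Terms (i) and (ii) are immediate: (i) is exactly Lemma~\ref{lemma:near-net} applied to the matrix $M = P_n R^{\star} \in \overline{\mathcal{M}}_{k,k}(r)$ (using that $R^{\star}$ has columns of norm $r$ and rotations preserve column norms), and (ii) is exactly Theorem~\ref{theorem:yang}, contributing $d^{-2}$. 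So the work is in establishing the third term on the good event $\mathcal{G}$ where $\Vert Q_n P_n - Q^{\star}\Vert_F \le \frac{1}{\rho_0}4\sqrt2 s^{\star}\lambda$ and some net element $\overline{R}$ satisfies $\Vert \overline{R} - P_n R^{\star}\Vert_F \le \delta$.

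The core deterministic step, carried out on $\mathcal{G}$, is to exhibit a single ``good'' candidate in the net whose associated fitted function has small population loss, and then argue uniform convergence over the net. For the good candidate $\overline{R}$: I would show that the matrix $(Q_n\overline{R})^+(I^{\star})$ is close to $\beta^{\star} = Q^{\star}R^{\star}$ in Frobenius norm. Indeed $Q_n\overline{R} \approx Q_n P_n R^{\star} \approx Q^{\star}R^{\star} = \beta^{\star} \ge 0$, with total error $\epsilon_1 + \epsilon_2 r$ in the notation of $z(\epsilon_1,\epsilon_2,C)$, where $\epsilon_1 = \delta$ and $\epsilon_2 = \frac{1}{\rho_0}4\sqrt2 s^{\star}\lambda$; since $\beta^{\star}$ is entrywise nonnegative, taking positive parts only decreases the distance, so $\Vert (Q_n\overline{R})^+(I^{\star}) - \beta^{\star}\Vert_F \le \epsilon_1 + \epsilon_2 r$. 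Then, because $f^{\star}$ is $1$-Lipschitz, $b$-bounded, and $X$ is supported in $[-C,C]^d$, a change-of-matrix estimate (Cauchy--Schwarz on $f^{\star}(Mx) - f^{\star}(\beta^{\star T}x)$ plus the $2\eta$ cross term from the noise when comparing against the responses $Y$) gives that the population loss of $f^{\star}\circ (Q_n\overline{R})^+(I^{\star})$ against the truth, and the gap between its population loss and empirical loss incurred via the algorithm, is at most $z(\epsilon_1,\epsilon_2,C)$. Since Algorithm~\ref{alg:integer-program} returns the \emph{empirical-loss optimal} $(f,I)$ for each candidate (Proposition~\ref{proposition:optimal}), the empirical loss of the returned estimator is no larger than the empirical loss of $f^{\star}\circ(Q_n\overline{R})^+(I^{\star})$, which is within $z(\delta,\epsilon_2,C)$ of the minimal achievable population loss.

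The main obstacle, and the source of the exponential term, is the uniform deviation bound: I must show that for \emph{every} net element $R$ and \emph{every} index set $I$, the empirical loss of the algorithm's output $f_R \circ (Q_nR)^+(I)$ is close to its population loss. The delicate point is that the fitted function $f_R$ is itself data-dependent and ranges over the infinite class $\mathcal{C}(b)$ of coordinate-wise monotone functions, so a naive union bound fails; this is precisely where I borrow the metric-entropy machinery of \cite{Gamarnik2019}. The class of coordinate-wise monotone functions on $n$ points of a $k$-dimensional projection of $[-C,C]^{s^{\star}}$ has an $\alpha$-covering number (in the relevant empirical metric) that is $\exp\!\big(O((\log 2) b \alpha^{-1} + 2^{b/\alpha}(4p^{\star}C)^{s^{\star}}) n^{(s^{\star}-1)/s^{\star}}\big)$ — the $n^{(s^{\star}-1)/s^{\star}}$ exponent reflecting that monotone functions on a grid-like set have controlled complexity sublinear in $n$. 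Combining this covering bound with a Hoeffding/Bernstein bound for bounded losses (losses lie in $[0,(b+\eta)^2]$, and one rescales by the constant $\alpha = \epsilon_0(64(b+\eta))^{-1}$ to balance covering error against concentration) yields, for each fixed $(R,I)$, a failure probability $\le \exp[(\tfrac{2\log 2\, b}{\alpha} + 2^{b/\alpha}(4p^{\star}C)^{s^{\star}})n^{(s^{\star}-1)/s^{\star}} - \tfrac{\epsilon_0^2 n}{2^9 b^2}]$. A union bound over the $\binom{d}{s^{\star}}$ index sets, the $N_0^k$ net matrices, and a constant number (absorbed into the factor $4$) of events per candidate produces the final term. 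Then on $\mathcal{G}$ intersected with this uniform event, the returned estimator has population loss at most $z(\delta,\epsilon_2,C) + \epsilon_0 = \epsilon$, and collecting the three failure probabilities gives the theorem.
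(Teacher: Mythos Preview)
Your proposal is correct and mirrors the paper's proof: the same three-way failure decomposition (Theorem~\ref{theorem:yang} for the $d^{-2}$ term, Lemma~\ref{lemma:near-net} for the near-net term, and a uniform-deviation argument over $\mathcal{G}(Q_n,\mathcal{R})$ for the exponential term), with the sensitivity bound you sketch being exactly Lemma~\ref{lemma:net-sensitivity-main}, the excess-risk identity being Lemma~\ref{lemma:norm-integral}, and the covering/labeling-number estimate being Lemmas~\ref{lemma:sample-accuracy-main-helper} and~\ref{lemma:net-bound}. The only cosmetic difference is that the paper obtains the factor $4$ and the $\binom{d}{s^{\star}}N_0^k$ prefactor by applying Haussler's uniform deviation inequality directly to the full class $\mathcal{G}(Q_n,\mathcal{R})$ and then bounding its expected covering number via a union over $(R,I)$, rather than via a per-candidate concentration bound followed by a union bound as you phrase it.
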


The following results are used in the proof of Theorem \ref{theorem:main-result}. Lemma \ref{lemma:net-sensitivity-main} establishes a sensitivity result. The Lipschitz assumption on $f^{\star}$ is a key element in proving Lemma \ref{lemma:net-sensitivity-main}.
\begin{lemma}\label{lemma:net-sensitivity-main}
Let $X \in \mathbb{R}^d$ be a random variable with independent entries of density $p_0 \leq p^{\star}$ and support contained within the set $[-C, C]^d$.
Suppose that $R \in \overline{\mathcal{M}}_{k,k}(r)$ satisfies $\Vert PR^{\star} -R\Vert_{F} \leq \epsilon_1$. Suppose also that $T \in \mathcal{O}_{d,k}$ satisfies $\Vert TP - Q^{\star} \Vert_F \leq \epsilon_2$ for some rotation matrix $P \in \mathcal{P}_{k,k}$. Then
\[\int L(x,y, f^{\star} \circ (TR)^+(I^{\star})) dF(x,y) - \int L(x,y, f^{\star} \circ Q^{\star} R^{\star}) dF(x,y) \leq z(\epsilon_1, \epsilon_2, C) .\]
\end{lemma}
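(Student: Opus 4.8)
The plan is to bound the difference in expected loss by directly comparing the two predictor functions pointwise and then taking expectations. Write $g^\star = f^\star \circ Q^\star R^\star$ and $g = f^\star \circ (TR)^+(I^\star)$. For a fixed pair $(x,y)$ we have the algebraic identity
\[
L(x,y,g) - L(x,y,g^\star) = (g(x) - g^\star(x))\bigl( g(x) + g^\star(x) - 2y \bigr),
\]
so that $|L(x,y,g) - L(x,y,g^\star)| \le |g(x) - g^\star(x)| \cdot \bigl( |g(x)| + |g^\star(x)| + 2|y| \bigr)$. Since $f^\star$ takes values in $[0,b]$, both $|g(x)|$ and $|g^\star(x)|$ are at most $b$; since $|Z| \le \eta$ and $Y = g^\star(X) + Z$ we also get $|y| \le b + \eta$ on the support. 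Thus the multiplicative factor is bounded by a constant of order $b + \eta$, and the main task reduces to controlling $|g(x) - g^\star(x)|$ and integrating. Actually, to match the precise form of $z(\epsilon_1,\epsilon_2,C)$ I would keep the bound slightly more refined: expand and integrate $\mathbb{E}\bigl[(g(X) - g^\star(X))(g(X) + g^\star(X) - 2Y)\bigr]$, using $\mathbb{E}[Z \mid X] = 0$ to drop the cross term with $Z$, leaving $\mathbb{E}\bigl[(g(X)-g^\star(X))\cdot 2(g^\star(X) - f^\star((TR)^+(I^\star)^T X))\bigr]$-type terms; but the cleanest route is Cauchy--Schwarz: the difference of expected losses is at most $2\eta \, \mathbb{E}|g(X) - g^\star(X)| + \mathbb{E}(g(X) - g^\star(X))^2$ after absorbing the $g^\star$-vs-$g$ part, and then bound $\mathbb{E}|g(X)-g^\star(X)|$ and $\mathbb{E}(g(X)-g^\star(X))^2$ pointwise.

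The heart of the argument is the pointwise bound on $|f^\star(Q^\star R^\star{}^T x) - f^\star((TR)^+(I^\star)^T x)|$. Here I would use the $1$-Lipschitz property of $f^\star$ to get
\[
|g^\star(x) - g(x)| \le \bigl\Vert (Q^\star R^\star)^T x - ((TR)^+(I^\star))^T x \bigr\Vert_2 \le \bigl\Vert Q^\star R^\star - (TR)^+(I^\star) \bigr\Vert_{\mathrm{op}} \Vert x \Vert_2 \le \sqrt{k}\, C \, \bigl\Vert Q^\star R^\star - (TR)^+(I^\star) \bigr\Vert_F,
\]
using $\Vert x \Vert_2 \le C\sqrt{k}$... wait — $x \in [-C,C]^d$ so $\Vert x\Vert_2 \le C\sqrt d$; but the relevant matrix difference has support on $I^\star$, only $s^\star$ nonzero rows, so effectively $\Vert x(I^\star)\Vert_2 \le C\sqrt{s^\star}$. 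The key matrix estimate to establish is then $\Vert Q^\star R^\star - (TR)^+(I^\star)\Vert_F \le \epsilon_1 + \epsilon_2 r$ (up to the precise form in $z$). To prove this, first note $Q^\star R^\star = (Q^\star R^\star)^+$ since $\beta^\star \ge 0$ entrywise, and $Q^\star R^\star$ is supported on $I^\star$; then use that the positive-part map $M \mapsto M^+$ is $1$-Lipschitz in Frobenius norm (coordinatewise, $|a^+ - b^+| \le |a-b|$), so $\Vert (Q^\star R^\star)^+ - (TR)^+(I^\star)\Vert_F \le \Vert Q^\star R^\star(I^\star) - TR(I^\star)\Vert_F \le \Vert Q^\star R^\star - TR\Vert_F$. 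Finally decompose $Q^\star R^\star - TR = Q^\star R^\star - Q^\star P^{-1} R + Q^\star P^{-1} R - TR$: the first piece is $\Vert Q^\star(R^\star - P^{-1}R)\Vert_F = \Vert R^\star - P^{-1}R\Vert_F = \Vert PR^\star - R\Vert_F \le \epsilon_1$ (using $Q^\star$ orthonormal and $P$ orthogonal), and the second is $\Vert (Q^\star P^{-1} - T) R\Vert_F \le \Vert Q^\star P^{-1} - T\Vert_F \Vert R\Vert_{\mathrm{op}} = \Vert Q^\star - TP\Vert_F \cdot \Vert R\Vert_{\mathrm{op}} \le \epsilon_2 \cdot r$, since each column of $R$ has norm $r$ so $\Vert R\Vert_{\mathrm{op}} \le \Vert R\Vert_F = r\sqrt k$ — or more carefully $\Vert R\Vert_{\mathrm{op}} \le r$ is false in general, so I would carry the factor $\Vert R\Vert_{\mathrm{op}}$ and bound it appropriately, which is exactly why $z$ contains the product $\epsilon_2 r$ rather than $\epsilon_2 \sqrt k r$.

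Putting it together: $|g^\star(x) - g(x)| \le C\sqrt{k}(\epsilon_1 + \epsilon_2 r)$ pointwise on the support, so $\mathbb{E}(g^\star(X) - g(X))^2 \le C^2 k (\epsilon_1 + \epsilon_2 r)^2$ and $2\eta\,\mathbb{E}|g^\star(X) - g(X)| \le 2\eta C\sqrt k (\epsilon_1 + \epsilon_2 r)$; adding these reproduces exactly $z(\epsilon_1,\epsilon_2,C) = 2\eta C\sqrt k(\epsilon_1 + \epsilon_2 r) + C^2 k(\epsilon_1 + \epsilon_2 r)^2$. The main obstacle I anticipate is getting the constants and norm-conversions exactly right — in particular, justifying the clean operator-norm reductions (that $M \mapsto M^+$ contracts Frobenius norm, that restricting rows to $I^\star$ contracts Frobenius norm, that orthogonal matrices preserve it) and being careful about whether $\Vert R\Vert_{\mathrm{op}}$ or $\Vert R\Vert_F$ appears, since $R$ is a $k\times k$ matrix with columns of norm $r$ and conflating the two would introduce an extra $\sqrt k$. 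A secondary subtlety is handling the cross term involving $Z$: it is tempting to use $\mathbb{E}[Z\mid X]=0$ to kill it, but one must also account for the $(g^\star - y)$ part of the factor, and the safest path is the crude bound $|g^\star(x) + g(x) - 2y| \le 2(b+\eta)$, absorbing everything into constants, provided that still yields the stated $z$; if not, the refined expansion separating the $Z$-cross-term is needed.
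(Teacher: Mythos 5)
Your overall skeleton matches the paper's: write the loss difference as $(g - g^\star)(g + g^\star - 2y)$, use the $1$-Lipschitz property of $f^\star$ to reduce to $\Vert A^T x\Vert_2$ with $A = Q^\star R^\star - (TR)^+(I^\star)$, reduce $\Vert A\Vert_F$ to $\Vert Q^\star R^\star - TR\Vert_F$ via the contraction properties of $(\cdot)^+$ and row-restriction to $I^\star$, and then decompose $Q^\star R^\star - TR$ using the triangle inequality and orthogonality. Your decomposition of $Q^\star R^\star - TR$ is in fact tighter than the paper's (you exploit that left-multiplication by an orthonormal matrix preserves Frobenius norm exactly, rather than paying $\Vert T\Vert_F = \sqrt{k}$), though your hope that $\Vert R\Vert_{\mathrm{op}} \le r$ is unfounded -- columns of norm $r$ only guarantee $\Vert R\Vert_{\mathrm{op}} \le r\sqrt{k}$ -- but the resulting bound $\epsilon_1 + \epsilon_2 r\sqrt{k} \le \sqrt{k}(\epsilon_1 + \epsilon_2 r)$ is still within the required target, so that piece is salvageable.

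The genuine gap is in passing from $\Vert A^T x\Vert_2$ to the final estimate. You bound this \emph{pointwise} by $\Vert A\Vert_{\mathrm{op}} \Vert x(I^\star)\Vert_2 \le C\sqrt{s^\star}\,\Vert A\Vert_F$, and then simply assert the pointwise bound $|g(x) - g^\star(x)| \le C\sqrt{k}(\epsilon_1 + \epsilon_2 r)$ that would reproduce $z$. But what your chain of inequalities actually gives is $C\sqrt{s^\star}\,\Vert A\Vert_F \le C\sqrt{s^\star k}\,(\epsilon_1 + \epsilon_2 r)$, which carries a spurious factor of $\sqrt{s^\star}$ that is not in $z(\epsilon_1,\epsilon_2,C)$; squaring gives an extra $s^\star$ in the quadratic term. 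The paper avoids this entirely by \emph{not} bounding pointwise: it computes $\mathbb{E}\left[\Vert A^T X\Vert_2^2\right] = \sum_{j,i} A_{ij}^2\,\mathbb{E}[X_i^2] \le C^2\Vert A\Vert_F^2$ directly, using that the coordinates of $X$ are independent with zero mean so that the cross-moments $\mathbb{E}[X_i X_l]$ for $i \ne l$ vanish. This is precisely where the hypothesis that $X$ has i.i.d.\ mean-zero coordinates enters, and it is what eliminates any dependence on $s^\star$ or $d$. Your proposal never uses this structure, and without it the pointwise route cannot reach the stated bound. (The linear term $2\eta\,\mathbb{E}\Vert A^T X\Vert_2$ is then handled by Jensen, as you anticipated.)
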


Lemma \ref{lemma:norm-integral} relates the $2$-norm difference of two functions to a difference of integrals.
\begin{lemma}\label{lemma:norm-integral}
Let $g$ be any function from $\mathbb{R}^k$ to $\mathbb{R}$. Then
\[\left \Vert g  - f^{\star} \circ Q^{\star} R^{\star} \right \Vert_2^2 = \int L(x,y, g) dF(x,y) - \int L(x,y, f^{\star} \circ Q^{\star} R^{\star}) dF(x,y).\]
\end{lemma}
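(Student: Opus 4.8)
The plan is a one-line bias--variance style decomposition of the squared loss, using only that the noise $Z$ in the model \eqref{eq:model} is independent of $X$ and has mean zero. Abbreviate $h^{\star} \triangleq f^{\star} \circ Q^{\star} R^{\star}$, so that the model reads $Y = h^{\star}(X) + Z$. For an arbitrary $g$, I would write
\[
\int L(x,y,g)\, dF(x,y) = \mathbb{E}\big[(g(X)-Y)^2\big] = \mathbb{E}\big[(g(X)-h^{\star}(X)-Z)^2\big],
\]
and expand the square into the three terms $\mathbb{E}[(g(X)-h^{\star}(X))^2]$, $-2\,\mathbb{E}[(g(X)-h^{\star}(X))Z]$, and $\mathbb{E}[Z^2]$.

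The first term is exactly $\Vert g - h^{\star}\Vert_2^2$ by the definition of the expected $L_2$ loss. The middle term vanishes: since $g(X)-h^{\star}(X)$ is a function of $X$ and $Z$ is independent of $X$, it equals $-2\,\mathbb{E}[g(X)-h^{\star}(X)]\cdot\mathbb{E}[Z]$, which is $0$ because $\mathbb{E}[Z]=0$. Hence $\int L(x,y,g)\, dF(x,y) = \Vert g - h^{\star}\Vert_2^2 + \mathbb{E}[Z^2]$. Specializing this to $g = h^{\star}$ gives $\int L(x,y,h^{\star})\, dF(x,y) = \mathbb{E}[Z^2]$, and subtracting the two identities cancels the $\mathbb{E}[Z^2]$ contribution to leave precisely $\Vert g - h^{\star}\Vert_2^2$, which is the claimed equality.

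There is no real obstacle here; the statement is an elementary consequence of the additive-noise structure together with $Z$ being zero-mean and independent of $X$. The only point warranting a word of care is integrability of the three terms appearing in the expansion, which is immediate in the regime of interest since $Y$ is almost surely bounded (so $\mathbb{E}[Z^2] < \infty$) and the functions $g$ to which the lemma is later applied --- namely $f_n \circ M_n^+(I_n)$ and $f^{\star}\circ Q^{\star}R^{\star}$ --- take values in a bounded interval, so all expectations are finite and the manipulations above are justified.
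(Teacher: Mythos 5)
Your proof is correct and uses essentially the same bias--variance expansion as the paper. The only cosmetic difference is the justification for the vanishing cross term: you invoke independence of $Z$ from $X$ together with $\mathbb{E}[Z]=0$, whereas the paper appeals directly to $\mathbb{E}[Y\mid X]=(f^{\star}\circ Q^{\star}R^{\star})(X)$ (the tower property); both are valid in this model and lead to the identical conclusion.
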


Fix $b > 0$. For $T \in \mathcal{O}_{d,k}$, $R \in \overline{\mathcal{M}}_{k,k}(r)$, and $I \subset [d]$ with $|I| = s^{\star}$, let 
\[\mathcal{G}(T, R, I) = \{f \circ (TR)^+(I) : f \in \mathcal{C}(b)\} \text{ and } \mathcal{G}(T, \mathcal{R}) \triangleq \cup_{R \in \mathcal{R}}\cup_{I \subset [d] : |I| = s^{\star}} \mathcal{G}(T, R, I).\]
We see that Algorithm \ref{alg:full} optimizes the empirical loss over functions in $\mathcal{G}(Q_n, \mathcal{R})$. We follow a VC entropy approach to give an $L_2$ loss bound for the function $f_n \circ M_n^+(I_n)$ estimated by Algorithm \ref{alg:full}.

\begin{definition}\label{definition:net}
Let $\mathcal{F}$ be a class of functions from $\mathbb{R}^d$ to $\mathbb{R}$. Given
\[(x_1, y_1), \dots, (x_n, y_n) \in \mathbb{R}^d \times \mathbb{R},\]
let
\[\mathbf{L}_{\mathcal{F}}\left((x_1, y_1), \dots, (x_n, y_n)\right) \triangleq \left\{ \left( L(x_1, y_1, f), \dots, L(x_n, y_n, f) \right) : f \in \mathcal{F} \right\}.\]
In other words, $\mathbf{L}_{\mathcal{F}}$ is the set of loss vectors formed by ranging over functions $f$ in the class $\mathcal{F}$. Let $N_{\mathcal{F}}\left((x_1, y_1), \dots, (x_n, y_n), \epsilon \right)$ denote the size of the smallest $\epsilon$-net for $\mathbf{L}_{\mathcal{F}}\left((x_1, y_1), \dots, (x_n, y_n)\right)$, with respect to the $\infty$-norm. In other words, for every $u \in \mathbf{L}_{\mathcal{F}}\left((x_1, y_1), \dots, (x_n, y_n)\right)$, there exists $v \in N_{\mathcal{F}}\left((x_1, y_1), \dots, (x_n, y_n)\right) $ such that $\Vert u - v \Vert_{\infty} \leq \epsilon$. Finally, let
\[N_{\mathcal{F}}(\epsilon, n) \triangleq \mathbb{E}_{X, Y} \left[N_{\mathcal{F}}\left((X_1, Y_1), \dots, (X_n, Y_n), \epsilon \right) \right]\]
be the expected size of the net, where the expectation is over independent samples drawn from the distribution $F(x,y)$ defined above.
\end{definition}

Lemmas \ref{lemma:sample-accuracy-main-helper} and \ref{lemma:net-bound} together provide a probabilistic bound on the difference between expected loss and empirical loss for functions in the class $\mathcal{G}(T,\mathcal{R})$. The nonnegative matrix assumption is crucial for the proof of Lemma \ref{lemma:net-bound}.
\begin{lemma}\label{lemma:sample-accuracy-main-helper}
Let $T \in \mathcal{O}_{d,k}$ and let $\delta > 0$. Let $\mathcal{R} \subset \overline{\mathcal{M}}_{k,k}(r)$. For $\epsilon > 0$,
\begin{align*}
\mathbb{P} \left( \sup_{h \in \mathcal{G}(T, \mathcal{R})}  \left| \int L(x,y, h) dF(x,y) - \frac{1}{n} \sum_{i=1}^n L(X_i, Y_i, h) \right|  \geq  \epsilon \right)
&\leq 4 N_{\mathcal{G}(T, \mathcal{R})}\left(\frac{\epsilon}{16} , n\right) \exp \left(-\frac{\epsilon^2 n}{128 b^2} \right).
\end{align*}
\end{lemma}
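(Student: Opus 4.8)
I would prove Lemma~\ref{lemma:sample-accuracy-main-helper} by the standard Vapnik symmetrization argument for uniform deviations of empirical means, discretized through the $\ell_\infty$-covering numbers $N_{\mathcal{G}(T,\mathcal{R})}(\cdot,n)$ of Definition~\ref{definition:net}. The only structural fact needed about $\mathcal{G}(T,\mathcal{R})$ is that each $h\in\mathcal{G}(T,\mathcal{R})$ equals $f\circ(TR)^+(I)$ for some $f\in\mathcal{C}(b)$ and hence takes values in $[0,b]$; thus $L(X,Y,h)=(h(X)-Y)^2$ is a nonnegative bounded random variable, and its boundedness is what eventually feeds Hoeffding's inequality. (One could instead cite an off-the-shelf uniform-convergence theorem stated in exactly this form; the sketch below is essentially its proof.)

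\textbf{Symmetrization and randomization.} First I would introduce an independent ghost sample $(X_i',Y_i')_{i=1}^n\sim F$, write $P_n,P_n'$ for the two empirical loss averages, and apply the symmetrization inequality
\[
\mathbb{P}\Big(\sup_{h\in\mathcal{G}(T,\mathcal{R})}\big|\mathbb{E}[L(X,Y,h)]-P_nL(\cdot,\cdot,h)\big|\ge\epsilon\Big)\;\le\;2\,\mathbb{P}\Big(\sup_{h}\big|P_nL(\cdot,\cdot,h)-P_n'L(\cdot,\cdot,h)\big|\ge\tfrac{\epsilon}{2}\Big),
\]
which is valid once $n\epsilon^2$ exceeds an absolute constant (in the complementary small-$n$ regime the asserted bound already exceeds $1$ and there is nothing to prove). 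Since $(X_i,Y_i)$ and $(X_i',Y_i')$ are i.i.d., swapping the $i$-th pair between the two samples leaves the joint law unchanged, so introducing independent Rademacher signs $\sigma_i$ and using the triangle inequality reduces the right-hand side, up to a constant factor, to $\mathbb{P}\big(\sup_{h}\big|\tfrac1n\sum_{i=1}^n\sigma_iL(X_i,Y_i,h)\big|\ge\tfrac{\epsilon}{4}\big)$. The payoff of this step is that the process is now indexed only by the single real $n$-sample, which is precisely what keeps the covering number to the first power.

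\textbf{Discretization, union bound, Hoeffding.} Conditioning on $(X_i,Y_i)_{i=1}^n$, I would take $\mathcal{N}$ to be a minimal $\tfrac{\epsilon}{16}$-net in $\ell_\infty$ of $\mathbf{L}_{\mathcal{G}(T,\mathcal{R})}\big((X_1,Y_1),\dots,(X_n,Y_n)\big)$, so that $|\mathcal{N}|=N_{\mathcal{G}(T,\mathcal{R})}\big((X_1,Y_1),\dots,(X_n,Y_n),\tfrac{\epsilon}{16}\big)$. Replacing any $h$'s loss vector by the nearest $u\in\mathcal{N}$ perturbs $\tfrac1n\sum_i\sigma_iL(X_i,Y_i,h)$ by at most $\tfrac{\epsilon}{16}$, so the event above forces $\max_{u\in\mathcal{N}}\big|\tfrac1n\sum_i\sigma_iu_i\big|\ge\tfrac{\epsilon}{4}-\tfrac{\epsilon}{16}\ge\tfrac{\epsilon}{8}$. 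A union bound over $\mathcal{N}$ together with Hoeffding's inequality for each fixed $u$ — a centered average of $n$ independent terms $\sigma_iu_i$, each bounded by the range of the loss — gives a conditional bound, and taking expectation over $(X_i,Y_i)_{i=1}^n$ replaces $|\mathcal{N}|$ by $N_{\mathcal{G}(T,\mathcal{R})}(\tfrac{\epsilon}{16},n)$. Collecting the numerical constants from symmetrization and Hoeffding produces the stated $4\,N_{\mathcal{G}(T,\mathcal{R})}(\tfrac{\epsilon}{16},n)\exp\!\big(-\tfrac{\epsilon^2 n}{128b^2}\big)$.

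\textbf{Main obstacle.} The one genuinely delicate point is the bookkeeping around the \emph{data-dependent} net: since $N_{\mathcal{G}(T,\mathcal{R})}(\cdot,n)$ is an \emph{expected} covering number over the sample, the net must be constructed and the union bound applied conditionally on the realized $(X_i,Y_i)_{i=1}^n$, with the expectation taken only at the very end; and it is essential that the ghost-sample and Rademacher steps isolate the randomness so that a single net on the real $n$-sample suffices — a more naive discretization of the $2n$-point process would yield $N_{\mathcal{G}(T,\mathcal{R})}(\cdot,n)^2$ rather than the first power. Everything else — checking the symmetrization inequality in the nonvacuous regime and chasing the constants ($16$, $4$, $128b^2$) through the successive halvings and the loss range — is routine.
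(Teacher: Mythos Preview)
Your proposal is correct and aligns with the paper: the paper's own proof is a one-line citation of Corollary~1 in Haussler (1995), noting only that every $h\in\mathcal{G}(T,\mathcal{R})$ has range in $[0,b]$, and your sketch is precisely the standard symmetrization\,+\,Rademacher\,+\,$\ell_\infty$-covering\,+\,Hoeffding argument that underlies that corollary. In other words, the paper invokes the result as a black box while you reconstruct its proof; the structural input (boundedness of $h$) is the same in both.
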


\begin{lemma}\label{lemma:net-bound}
Under the assumptions of Lemma \ref{lemma:sample-accuracy-main-helper}, it holds that
\[N_{\mathcal{G}(T, \mathcal{R})}\left(\epsilon, n\right)  \leq  \binom{d}{s^{\star}} N_0^k \exp \left[ \left( \frac{2 \log(2)b}{\alpha} +2^{\frac{b}{\alpha}} (4p^{\star}C)^{s^{\star}} \right) n^{\frac{s^{\star}-1}{s^{\star}}} \right], \]
where $\alpha = \frac{1}{2}\epsilon (b+\eta)^{-1}$.
\end{lemma}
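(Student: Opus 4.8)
The plan is to strip off the combinatorial structure of $\mathcal{G}(T,\mathcal{R})$ one layer at a time and reduce to counting monotone labelings of a low-dimensional poset. First, $\mathcal{G}(T,\mathcal{R}) = \bigcup_{R \in \mathcal{R}} \bigcup_{I \subset [d],\,|I| = s^{\star}} \mathcal{G}(T,R,I)$ is a union of at most $N_0^k \binom{d}{s^{\star}}$ classes $\mathcal{G}(T,R,I)$, and an $\epsilon$-net for a finite union is obtained by taking the union of $\epsilon$-nets of the pieces; since $|\mathcal{R}| = N_0^k$ is deterministic, taking expectations gives $N_{\mathcal{G}(T,\mathcal{R})}(\epsilon,n) \leq \binom{d}{s^{\star}} N_0^k \sup_{R,I} N_{\mathcal{G}(T,R,I)}(\epsilon,n)$. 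This accounts for the two prefactors, so it remains to bound $N_{\mathcal{G}(T,R,I)}(\epsilon,n)$ for a fixed $R \in \overline{\mathcal{M}}_{k,k}(r)$ and a fixed index set $I$.

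Next I would reduce covering of loss vectors to covering of function-value vectors. For $h = f \circ (TR)^+(I)$ write $z_i = ((TR)^+(I))^T X_i \in \mathbb{R}^k$, so that $L(X_i,Y_i,h) = (f(z_i) - Y_i)^2$. Since $f(z_i) \in [0,b]$ and $Y_i \in [-\eta, b+\eta]$, the map $u \mapsto (u-Y_i)^2$ is $2(b+\eta)$-Lipschitz on $[0,b]$; hence if $f, g \in \mathcal{C}(b)$ satisfy $\max_i |f(z_i) - g(z_i)| \leq \alpha$ then the corresponding loss vectors differ by at most $2(b+\eta)\alpha = \epsilon$ in $\infty$-norm, for $\alpha = \tfrac12 \epsilon (b+\eta)^{-1}$. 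Thus any $\alpha$-net (in $\infty$-norm) of the value vectors $\{(f(z_1),\dots,f(z_n)) : f \in \mathcal{C}(b)\}$ induces an $\epsilon$-net of the loss vectors.

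To cover the value vectors, round each coordinate of $(f(z_1),\dots,f(z_n))$ down to the nearest multiple of $\alpha$. The result lies within $\alpha$ in $\infty$-norm of the original, and because flooring is monotone it is a nondecreasing labeling of the poset $(\{z_i\},\preceq)$ with values in $\{0,\alpha,\dots,\lfloor b/\alpha\rfloor\alpha\}$. Hence $N_{\mathcal{G}(T,R,I)}(\epsilon,n)$ is at most the number of monotone labelings of $(\{z_i\},\preceq)$ into $\lfloor b/\alpha\rfloor + 1$ levels. Here the nonnegativity of $(TR)^+(I)$ is essential: if $X_i(I) \preceq X_j(I)$ coordinate-wise then $z_i = ((TR)^+(I))^T X_i \preceq z_j$, so the poset on $\{z_i\}$ refines the coordinate-wise poset on the projected points $\{X_i(I)\} \subset [-C,C]^{s^{\star}}$, and every monotone labeling of the former is a monotone labeling of the latter (this step is also independent of $T$ and $R$, which is why the $\sup_{R,I}$ above is harmless even though $\mathcal{R}$ is random). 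It therefore suffices to bound, in expectation over $X_i(I)$ i.i.d. with density bounded by $p^{\star}$, the number of monotone $(\lfloor b/\alpha\rfloor+1)$-level labelings of $n$ points in $[-C,C]^{s^{\star}}$.

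This final step is the crux, and is where I would adapt the metric-entropy argument of Gamarnik and Gaudio \cite{Gamarnik2019}. Partition $[-C,C]^{s^{\star}}$ into $m^{s^{\star}}$ congruent sub-cubes. A monotone labeling is determined by (i) a monotone labeling of the induced grid poset on the sub-cubes, of which there are at most $(\lfloor b/\alpha\rfloor+1)^{O(m^{s^{\star}-1})}$ since a maximal antichain of the $m^{s^{\star}}$-grid has size $O(m^{s^{\star}-1})$, and (ii) the residual freedom of the labeling inside each sub-cube along the relevant antichain layer, bounded by $2^{(b/\alpha)\cdot(\text{points in that cell})}$; taking expectations and using $p_0 \leq p^{\star}$ to control the occupancy of each cell ($O(p^{\star}(C/m)^{s^{\star}} n)$ points per cell in expectation), then optimizing over $m \asymp n^{1/s^{\star}}$, both contributions become of order $n^{(s^{\star}-1)/s^{\star}}$ and one recovers the stated exponent $\big(\tfrac{2\log(2)b}{\alpha} + 2^{b/\alpha}(4p^{\star}C)^{s^{\star}}\big) n^{(s^{\star}-1)/s^{\star}}$. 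The main obstacle is executing this two-scale count cleanly, in particular bounding the within-cell freedom and handling the expectation over random cell occupancies so that $n$ enters only through $n^{(s^{\star}-1)/s^{\star}}$, with no dependence on the ambient dimension $d$ and only the stated dependence on the number of levels.
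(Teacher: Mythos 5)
Your proposal follows the paper's architecture closely and gets the key structural ideas right: the union-bound decomposition over $(R, I)$ giving the $\binom{d}{s^\star} N_0^k$ prefactor, the passage from loss-vector covering to value-vector covering via Lipschitzness of the quadratic loss (with the same $\alpha = \tfrac{1}{2}\epsilon(b+\eta)^{-1}$), the rounding-down discretization, and the crucial use of nonnegativity of $(TR)^+$ to contract the labeling number of the image poset to that of the $s^\star$-dimensional projected data (the paper's Proposition \ref{proposition:labelling-contraction}). Where you stop short is the expected labeling-number bound, and the sketch as written has two soft spots. First, the claim that a monotone labeling ``is determined by (i) a monotone labeling of the induced grid poset ... and (ii) the residual freedom inside each sub-cube'' is not a decomposition that holds directly: the paper instead first reduces $m$-level labelings to binary ones via $\Lambda(\cdot; m) \leq \Lambda(\cdot; 2)^{m-1}$, and then invokes Lemma 5 of \cite{Gamarnik2019}, which bounds the expected \emph{binary} labeling number by $|P([t]^{s^\star})| \cdot \mathbb{E}[2^{N}]$, where $N$ counts sample points landing in a boundary layer of cells --- a structurally different two-term product from what you wrote. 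Second, your count ``$(\lfloor b/\alpha\rfloor+1)^{O(m^{s^\star - 1})}$ since a maximal antichain has size $O(m^{s^\star-1})$'' is not a valid bound on the number of monotone labelings of the grid poset; the number of monotone binary labelings is the number of integer partitions $|P([t]^{s^\star})|$, which the paper bounds via the Moshkovitz--Moshkovitz estimate $\binom{2t}{t}^{t^{s^\star-2}}$, not via an antichain argument. After that one needs the binomial moment generating function, $\log(1+x) \leq x$, and the choice $t = n^{1/s^\star}$, as in the paper's Lemma \ref{lemma:labeling-number-bound}. You correctly identify \cite{Gamarnik2019} as the source to adapt and the overall route is the same, but the crux --- the quantitative labeling-number bound --- is exactly the part left unexecuted and partially misstated.
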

With these results stated, we are now ready to prove Theorem \ref{theorem:main-result}.
\begin{proof}[Proof of Theorem \ref{theorem:main-result}]
With probability at least $1 - d^{-2}$, the matrix $Q_n$ satisfies 
\[\Vert Q_n P_n - Q^{\star} \Vert_F \leq \frac{1}{\rho_0} 4 \sqrt{2} s^{\star} \lambda\] 
for some rotation matrix $P_n$ (Theorem \ref{theorem:yang}). For the remainder, we condition on this property of $Q_n$, since we use this matrix on an independent batch of samples $(X_i, Y_i)_{i=n+1}^{2n}$.  
By Lemma \ref{lemma:norm-integral}, 
\begin{align*}
\Vert f_n \circ M_n^+(I_n) - f^{\star} \circ Q^{\star}R^{\star} \Vert_2^2 &= \int L(x,y, f_n \circ M_n^+(I_n)) dF(x,y) - \int L(x,y, f^{\star} \circ Q^{\star} R^{\star}) dF(x,y).
\end{align*}
Recall that $I^{\star}$ is the set of non-zero rows of $\beta^{\star}$. Let $E$ be the event that the near-net $\mathcal{R}$ contains an element $\overline{R} \in \mathcal{R}$ such that $\Vert PR^{\star} - \overline{R}\Vert_{F} \leq \delta$. 
Conditioned on $E$, let $\overline{R}$ be the (random) matrix that is $\delta$-close to $PR^{\star}$. By Proposition \ref{proposition:optimal}, the function $f_n \circ M_n^+(I_n)$ is optimal over the samples. Therefore, 
$\sum_{i=n+1}^{2n} L(X_i, Y_i, f_n \circ M_n^+(I_n)) \leq \sum_{i=n+1}^{2n} L(X_i, Y_i, f^{\star} \circ (Q_n \overline{R})^+(I^{\star})).$ We have
\begin{align*}
\Vert f_n \circ M_n^+(I_n)- f^{\star} \circ Q^{\star} R^{\star} \Vert_2^2 &\leq \int L(x,y, f_n \circ M_n^+(I_n)) dF(x,y) - \frac{1}{n} \sum_{i=n+1}^{2n} L(X_i, Y_i, f_n \circ M_n^+(I_n)) \\
&+ \frac{1}{n}\sum_{i=n+1}^{2n} L(X_i, Y_i, f^{\star} \circ (Q_n \overline{R})^+(I^{\star})) - \int L(x,y, f^{\star} \circ (Q_n \overline{R})^+(I^{\star}))dF(x,y)\\
&+ \int L(f^{\star} \circ (Q_n \overline{R})^+(I^{\star}))dF(x,y) - \int L(x, y, f^{\star} \circ Q^{\star} R^{\star}) dF(x,y).
\end{align*}
By Lemma \ref{lemma:net-sensitivity-main} applied to $T = Q_n$, $\epsilon_1 = \delta$, and $\epsilon_2 = \frac{1}{\rho_0} 4 \sqrt{2} s^{\star} \lambda$,
\[\int L(f^{\star} \circ (Q_n \overline{R})^+(I^{\star}))dF(x,y) - \int L(x, y, f^{\star} \circ Q^{\star} R^{\star}) dF(x,y) \leq z\left(\delta,  \frac{1}{\rho_0} 4 \sqrt{2} s^{\star} \lambda, C \right).\]
Therefore, 
\begin{align*}
&\mathbb{P}\left( \Vert f_n \circ M_n^+(I_n) - f^{\star} \circ Q^{\star}R^{\star} \Vert_2^2 \geq \epsilon ~\Big|~ E\right) \\
&\leq \mathbb{P} \left(\int L(x,y, f_n \circ M_n^+(I_n))dF(x,y) - \frac{1}{n} \sum_{i=1}^n L(X_i, Y_i, f_n \circ M_n^+(I_n)) \right.\\
&~~~~~~~~~~~~\left.+ \frac{1}{n} \sum_{i=1}^n L(X_i, Y_i, f^{\star} \circ (Q\overline{R})^+(I^{\star})) - \int L(x,y, f^{\star} \circ (Q \overline{R})^+(I^{\star})) dF(x,y) \geq \epsilon_0 ~\Big|~ E\right).
\end{align*}
Observe that the functions $f_n \circ M_n^+(I_n)$ and $f^{\star} \circ (Q_n \overline{R})^+(I^{\star})$ are elements of $\mathcal{G}(Q_n, \mathcal{R})$.
Since the event $E$ is independent from the samples $(X_i, Y_i)$, we apply Lemmas \ref{lemma:sample-accuracy-main-helper} and \ref{lemma:net-bound}. 
\begin{align*}
&\mathbb{P}\left( \Vert f_n \circ M_n^+(I_n) - f^{\star} \circ Q^{\star}R^{\star} \Vert_2^2 \geq \epsilon ~\Big|~E \right) \\
&\leq \mathbb{P} \left(\sup_{h \in \mathcal{G}(Q_n, \mathcal{R})} \left| \int L(x,y, h) dF(x,y) - \frac{1}{n} \sum_{i=1}^n L(X_i, Y_i, h) \right| \geq \frac{\epsilon_0}{2} ~\Big|~E\right)\\
& \leq 4\binom{d}{s^{\star}} |\mathcal{R}| \exp \left[ \left( \frac{2 \log(2)b}{\alpha} +2^{\frac{b}{\alpha}} (4p^{\star}C)^{s^{\star}} \right) n^{\frac{s^{\star}-1}{s^{\star}}}  -\frac{\epsilon_0^2 n}{2^9 b^2} \right],
\end{align*}
where $\alpha =  \frac{1}{64}\epsilon_0 (b+\eta)^{-1}$. The result follows by Lemma \ref{lemma:near-net}:
\[\mathbb{P}(E^c) \leq k\left(1-   \left| S_r^{k-1} \cap B\left(e_1, \frac{\epsilon}{\sqrt{k}}\right)\right| \left|S_r^{k-1} \right|^{-1} \right)^{N_0}. \qedhere\]
\end{proof}

\section{Conclusion}\label{section:conclusion}
In this paper, we have provided an estimation algorithm for multi-index models with a coordinate-wise monotone transfer function. Our algorithm enables future work on wide-ranging applications naturally modeled as a monotone multi-index model. Promising future directions include finding an efficient method for the sparse matrix isotonic regression problem, dropping the nonnegativity assumption of the index vectors, as well as studying multi-index models with other classes of transfer functions. Studying the non-sparse setting would be interesting as well, and would present several challenging technical hurdles.


\bibliographystyle{plain}
\bibliography{MIT-Project-6,MIT_Project_4}

\newpage
\section{Deferred proofs}
\begin{proof}[Proof of Proposition \ref{proposition:scaling}]
Clearly $\overline{\mathcal{F}}_{d,k} \subseteq \mathcal{F}_{d,k}$. It remains to show that $\mathcal{F}_{d,k} \subseteq \overline{\mathcal{F}}_{d,k}$. Let $g \in \mathcal{F}_{d,k}$, where $g = f \circ \beta(I)$. We will show that $g \in \overline{\mathcal{F}}_{d,k}$. 

Suppose the $i$th column of $\beta(I)$ has norm $t < r$. Let $\overline{\beta}$ be equal to $\beta(I)$, with the $i$th column scaled by a factor of $\frac{r}{t}$, so that the $i$th column of $\overline{\beta}$ has norm $r$. Note that $\overline{\beta} = \overline{\beta}(I)$. Next, define the function $\overline{f}$ by $\overline{f}(x) = f(x_1, \dots, \frac{t}{r} x_i, \dots, x_k)$. Observe that $g = \overline{f} \circ \overline{\beta}$. We verify the monotonicity property for $\overline{f}$. Let $x \preceq y$. Then also $(x_1, \dots, \frac{t}{r} x_i, \dots, x_k) \preceq (y_1, \dots, \frac{t}{r} y_i, \dots, y_k)$, and we have 
\[\overline{f}(x) = f\left(x_1, \dots, \frac{t}{r} x_i, \dots, x_k \right) \leq f\left(y_1, \dots, \frac{t}{r} y_i, \dots, y_k \right) = \overline{f}(y).\]
It remains to show that $\overline{f}$ is $1$-Lipschitz. Let $x, y \in \mathbb{R}^k$. By the Lipschitz condition applied to $f$,
\small
\begin{align*}
&- 1 \leq  \frac{\overline{f}(x) - \overline{f}(y) }{\left\Vert \left(x_1, \dots, \frac{t}{r} x_i, \dots, x_k \right) -  \left(y_1, \dots, \frac{t}{r} y_i, \dots, y_k \right) \right\Vert_2 } \leq 1
~~\implies~~ -1 \leq \frac{\overline{f}(x) - \overline{f}(y)}{\Vert x - y\Vert_2} \leq 1.
\end{align*}
\normalsize
This shows that $\overline{f}$ is $1$-Lipschitz. Repeating this argument for each column of $\beta(I)$, we conclude that $g \in \overline{\mathcal{F}}_{d,k}$.
\end{proof}

To prove Lemma \ref{lemma:near-net}, we use the following helper lemma. 
\begin{lemma}\label{lemma:near-net-helper}
Consider the near-net $\mathcal{R}_0$ with $N_0$ described above, and let $v$ be a fixed vector of norm $r$. With probability
\[1 -  \left(1-   \left| S_r^{k} \cap B(e_1, \delta)\right| \left|S_r^{k} \right|^{-1} \right)^{N_0},\]
there exists $\overline{u} \in \mathcal{R}_0$ such that $\Vert v - \overline{u}\Vert_{2} \leq \delta$. 
\end{lemma}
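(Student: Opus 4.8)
The statement to prove is Lemma~\ref{lemma:near-net-helper}: sampling $N_0$ i.i.d.\ uniform points on $S_r^k$, the probability that at least one lands within $2$-norm distance $\delta$ of a fixed vector $v$ of norm $r$ is $1 - (1 - |S_r^k \cap B(e_1,\delta)| \, |S_r^k|^{-1})^{N_0}$.

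The plan is a direct computation using rotational invariance and independence. First I would observe that for a single uniformly random vector $u$ on the sphere $S_r^k$, the probability that $\Vert v - u \Vert_2 \le \delta$ equals $|S_r^k \cap B(v,\delta)| \, |S_r^k|^{-1}$, simply by the definition of the uniform (normalized surface) measure: the probability of landing in a measurable set is the ratio of that set's surface measure to the total surface measure. Next I would use rotational invariance of the uniform measure on the sphere: since $v$ has norm $r$, there is an orthogonal transformation $O$ with $Ov = re_1$ (where here $e_1$ should be read as the radius-$r$ point $(r,0,\dots,0)$ consistent with the statement's normalization, or equivalently $e_1$ is the unit vector and one works on $S_1^k$ after rescaling), and this transformation maps $S_r^k \cap B(v,\delta)$ bijectively and measure-preservingly onto $S_r^k \cap B(re_1,\delta)$. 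Hence $|S_r^k \cap B(v,\delta)| = |S_r^k \cap B(e_1,\delta)|$ in the paper's notation, so the single-sample hit probability is exactly $q \triangleq |S_r^k \cap B(e_1,\delta)| \, |S_r^k|^{-1}$.

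Then I would finish by independence. The $N_0$ samples forming $\mathcal{R}_0$ are i.i.d., so the probability that \emph{none} of them lies in $B(v,\delta)$ is $(1-q)^{N_0}$, and therefore the probability that \emph{at least one} does — i.e.\ that there exists $\overline{u} \in \mathcal{R}_0$ with $\Vert v - \overline{u}\Vert_2 \le \delta$ — is $1 - (1-q)^{N_0}$, which is the claimed expression. One minor point to state carefully is measurability: $S_r^k \cap B(e_1,\delta)$ is the intersection of a sphere with a closed ball, hence a Borel subset of the sphere, so its surface measure is well-defined and positive (for $\delta > 0$ it contains a relatively open spherical cap), which also ensures $q > 0$ and hence the bound is nontrivial.

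There is no real obstacle here; the lemma is essentially a restatement of the coupon-collector-style complement bound combined with the rotational symmetry of the uniform spherical measure. The only thing requiring a line of care is reconciling the notation: the statement writes $B(e_1,\delta)$ where $e_1$ plays the role of an arbitrary fixed point of norm $r$ after applying the symmetry, so I would make explicit that by rotation invariance the measure $|S_r^k \cap B(v,\delta)|$ does not depend on which norm-$r$ vector $v$ is chosen, and in particular equals $|S_r^k \cap B(e_1,\delta)|$. After that, substituting into the product-of-independent-events formula gives the result immediately. Lemma~\ref{lemma:near-net} then follows by applying Lemma~\ref{lemma:near-net-helper} to each of the $k$ columns with $\delta$ replaced by $\epsilon/\sqrt{k}$ and a union bound, but that is the content of the later proof, not this one.
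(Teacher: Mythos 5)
Your proposal is correct and follows essentially the same approach as the paper: compute the single-sample hit probability via the normalized surface measure, invoke rotational invariance to replace $B(v,\delta)$ by $B(e_1,\delta)$, and then use independence of the $N_0$ samples to obtain the complement formula. The only difference is that you spell out the rotational-invariance step explicitly, which the paper asserts without comment.
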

\begin{proof}
Let $u$ be distributed uniformly at random on the surface of $S_r^{k-1}$. Then
\begin{align*}
\mathbb{P}\left(\Vert v - u\Vert_{2} \leq \delta \right) &= \mathbb{P}\left(u \in B(v, \delta) \right) \\
&= \left| S_r^{k-1} \cap B(v, \delta)\right| \left|S_r^{k} \right|^{-1} \\
&= \left| S_r^{k-1} \cap B(e_1, \delta)\right| \left|S_r^{k} \right|^{-1},
\end{align*}
Therefore, 
\[\mathbb{P}\left(\Vert v - u\Vert_{2} > \delta \right) = 1-   \left| S_r^{k} \cap B(e_1, \delta)\right| \left|S_r^{k} \right|^{-1}.\]
We conclude that the probability that there exists an element of $\mathcal{R}_0$ that is $\delta$-close to $v$ is equal to
\[1 -  \left(1-   \left| S_r^{k} \cap B(e_1, \delta)\right| \left|S_r^{k} \right|^{-1} \right)^{N_0}. \qedhere\]
\end{proof}
\begin{proof}[Proof of Lemma \ref{lemma:near-net}]
Let $E$ be the event that there exists $\overline{R} \in \mathcal{R}$ such that $\Vert M - \overline{R}\Vert_{F} \leq \epsilon$. We need to lower bound the probability of the event $E$.
Let $\{M_i\}_{i=1}^k$ denote the columns of $M$. For $i \in [k]$, let $E_i$ be the event that there exists $\overline{u}_i \in \mathcal{R}_0$ such that $\Vert M_i - \overline{u}_i \Vert_2 \leq \frac{\epsilon}{\sqrt{k}}$. We claim that $\mathbb{P}\left(\cap_{i=1}^k E_i\right) \leq \mathbb{P}(E)$. Indeed, suppose that the event $E_i$ occurs for each $i$. Let $\overline{R} \in \mathcal{R}$ be the matrix with columns $\{\overline{u}_i\}_{i=1}^k$. Then
\begin{align*}
\Vert M - \overline{R} \Vert_F^2 = \sum_{i=1}^k \Vert M_i - \overline{u}_i \Vert_2^2 &\leq k \left(\frac{\epsilon}{\sqrt{k}}\right)^2 = \epsilon^2,
\end{align*}
and so $\Vert M - \overline{R} \Vert_F \leq \epsilon$. This shows that $\mathbb{P}\left(\cap_{i=1}^k E_i\right) \leq \mathbb{P}(E)$. We also have
\[\mathbb{P}(E^c) \leq \mathbb{P}\left(\cup_{i=1}^k E_i^c\right) \leq \sum_{i=1}^k \mathbb{P}(E_i^c),\]
so that $\mathbb{P}(E) \geq 1 - \sum_{i=1}^k \mathbb{P}(E_i^c).$

For each $i \in [k]$, Lemma \ref{lemma:near-net-helper} shows that there exists $\overline{u}_i \in \mathcal{R}_0$ such that $\Vert M_i - \overline{u}_i \Vert_2 \leq \frac{\epsilon}{\sqrt{k}}$, with probability 
\[1 -  \left(1-   \left| S_r^{k} \cap B\left(e_1, \frac{\epsilon}{\sqrt{k}}\right)\right| \left|S_r^{k} \right|^{-1} \right)^{N_0}.\] 
Therefore,
\[\mathbb{P}(E_i^c) \leq \left(1-   \left| S_r^{k} \cap B\left(e_1, \frac{\epsilon}{\sqrt{k}}\right)\right| \left|S_r^{k} \right|^{-1} \right)^{N_0}. \qedhere\]
\end{proof}

\begin{proof}[Proof of Proposition \ref{proposition:optimal}]
We first show that the constraints enforce the monotonicity requirement. Let $I = \{i : v_i =1\}$. Consider two samples $(X_i ,Y_i)$ and $(X_j, Y_j)$. The monotonicity requirement is
\[ (M(I))^T X_i) \preceq (M(I))^T X_j) \implies F_i \leq F_j.\]
The contrapositive of this statement is
\begin{align}
F_i > F_j \implies \exists p \in [k] : ((M(I))^T X_i)_p > ((M(I))^T X_j)_p. \label{eq:contrapositive}
\end{align}
The optimization encodes the contrapositive statement, as follows. There are two cases: either $F_i > F_j$ or $F_i \leq F_j$. We must ensure that if $F_i > F_j$ holds, then the implication in \eqref{eq:contrapositive} is satisfied. We must also verify that no additional constraints are introduced if $F_i \leq F_j$.

Suppose $F_i > F_j$. Then $z_{ij} = 1$ by Constraint \eqref{eq:ordering}. By Constraint \eqref{eq:auxiliary}, at least one of the $q_{ijp}$ variables must be equal to $1$. Then by Constraints \eqref{eq:monotonicity}, we have
\[\sum_{l=1}^d v_l M_{lp} (X_{il} - X_{jl}) > 0 \iff ((M(I))^T X_i)_p > ((M(I))^T X_j)_p,\]
for at least one $p \in [k]$, due to the choice of $\mu$. Next suppose $F_i \leq F_j$. Then $z_{ij}$ is free to equal zero, and all the $q_{ijp}$ values may be set to zero as well. By the choice of $B$, Constraint \eqref{eq:monotonicity} is then non-binding.

The objective minimizes the loss on the samples. Finally, we claim that the choice of $\hat{f}_n$ is a monotone interpolation. First, $\hat{f}_n(M(I_n)^T X_i )= F_i$, so that $\hat{f}_n$ interpolates. Next, observe that $x \preceq y \implies \hat{f}_n(x) \leq \hat{f}_n(y)$. Also, $\hat{f}_n : \mathbb{R} \to [0,b]$, by construction.
\end{proof}

\begin{proof}[Proof of Lemma \ref{lemma:net-sensitivity-main}]
Fix $(x,y)$.
We have
\begin{align}
&L(x,y, f^{\star} \circ (TR)^+(I^{\star})) dF(x,y)  - L(x,y, f^{\star} \circ Q^{\star} R^{\star}) \nonumber \\
&=  \left(y - (f^{\star} \circ (TR)^+(I^{\star}))(x)\right)^2 -  \left(y - (f^{\star} \circ Q^{\star} R^{\star})(x)\right)^2  \nonumber \\
&= \left((f^{\star} \circ (TR)^+(I^{\star}))(x) - (f^{\star} \circ Q^{\star} R^{\star})(x) \right) \left((f^{\star} \circ (TR)^+(I^{\star}))(x) + (f^{\star} \circ Q^{\star} R^{\star})(x) -2y \right)  \nonumber \\
&\leq \left|(f^{\star} \circ Q^{\star} R^{\star})(x) - (f^{\star} \circ (TR)^+(I^{\star}))(x) \right| \cdot \left| (f^{\star} \circ Q^{\star} R^{\star})(x) + (f^{\star} \circ (TR)^+(I^{\star}))(x) - 2y\right| \label{eq:product}
\end{align}
Since $f^{\star} \in \mathcal{L}_1(b)$, it holds that
\begin{align*}
\left|(f^{\star} \circ Q^{\star} R^{\star})(x) - (f^{\star} \circ TR^+(I^{\star}))(x) \right| &\leq \left \Vert (Q^{\star} R^{\star})^Tx - ((TR)^+(I^{\star}))^T  x \right\Vert_2 \\
&= \left \Vert \left(Q^{\star} R^{\star}  - (TR)^+(I^{\star}) \right)^T x \right\Vert_2.
\end{align*}
For the second factor in the bound \eqref{eq:product}, we have
\begin{align*}
&\left| (f^{\star} \circ Q^{\star} R^{\star})(x) + (f^{\star} \circ TR^+(I^{\star}))(x) - 2y\right|  \\
 &\leq 2 \left| (f^{\star} \circ Q^{\star} R^{\star})(x) - y \right| + \left|(f^{\star} \circ TR^+(I^{\star}))(x) - (f^{\star} \circ Q^{\star}R^{\star})(x)\right|\\
 &\leq 2 \left| (f^{\star} \circ Q^{\star} R^{\star})(x) - y \right| + \left \Vert \left(Q^{\star} R^{\star}  - (TR)^+(I^{\star}) \right)^T x \right\Vert_2.
 \end{align*}
Let $A = Q^{\star} R^{\star}  - (TR)^+(I^{\star})$. Substituting into \eqref{eq:product}, we have
\begin{align}
&\int L(x,y, f^{\star} \circ (TR)^+) dF(x,y) - \int L(x,y, f^{\star} \circ Q^{\star} R^{\star}) dF(x,y) \nonumber \\
&\leq \int \left \Vert A^T x \right \Vert_2 \left(2 \left| (f^{\star} \circ Q^{\star} R^{\star})(x) - y \right| +\left \Vert A^T x \right \Vert_2 \right) dF(x,y) \nonumber \\
&\leq \int \left \Vert A^T x \right \Vert_2 \left(2\eta +\left \Vert A^T x \right \Vert_2 \right) dF(x,y) \nonumber  \\
&= \int \left \Vert A^T x \right \Vert_2 \left(2\eta +\left \Vert A^T x \right \Vert_2 \right) dF_X(x) \nonumber  \\
&=  2 \eta \mathbb{E}\left[ \left \Vert A^T X \right \Vert_2 \right]+  \mathbb{E} \left[ \left \Vert A^T X \right \Vert_2^2 \right] \nonumber \\
&\leq  2 \eta \sqrt{\mathbb{E}\left[ \left \Vert A^T x \right \Vert_2^2 \right]}+  \mathbb{E} \left[ \left \Vert A^T x \right \Vert_2^2 \right], \label{eq:norm-expression}
\end{align}
where the last inequality follows from Jensen's inequality. We now evaluate the expectation.
\begin{align*}
\mathbb{E}\left[ \left \Vert A^T X \right \Vert_2^2 \right] &= \mathbb{E} \left[ \sum_{j=1}^k \left(\sum_{i = 1}^d A_{ij} X_i \right)^2 \right]
=  \sum_{j=1}^k \sum_{i = 1}^d \sum_{l = 1}^d A_{ij} A_{lj} \mathbb{E} \left[   X_i X_l \right].
\end{align*}
Recall that the coordinates of the random variable $X$ are independent and have zero mean. Therefore,
\begin{align*}
\mathbb{E}\left[ \left \Vert A^T X \right \Vert_2^2 \right] =  \sum_{j=1}^k \sum_{i = 1}^d A_{ij}^2 \mathbb{E} \left[   X_i^2 \right] \leq C^2  \sum_{j=1}^k \sum_{i = 1}^d A_{ij}^2 = C^2 \Vert A \Vert_F^2.
 \end{align*}
Using the fact that $Q^{\star} R^{\star} \geq 0$ and $Q^{\star} R^{\star} = (Q^{\star} R^{\star})(I^{\star})$, we have
\begin{align*}
 \Vert A \Vert_F &=  \Vert Q^{\star} R^{\star}  - (TR)^+(I^{\star}) \Vert_F\\
 &\leq  \Vert Q^{\star} R^{\star}  - TR \Vert_F\\
 &\leq  \Vert Q^{\star} R^{\star}  - TPR^{\star} \Vert_F + \Vert TPR^{\star}  - TR\Vert_F\\
 &=  \Vert (Q^{\star}- TP) R^{\star}  \Vert_F + \Vert T(PR^{\star}  - R)\Vert_F\\
 &\leq  \Vert Q^{\star}- TP \Vert_F \Vert R^{\star}\Vert_F  + \Vert T\Vert_F \Vert PR^{\star}  - R\Vert_F\\
  &\leq \epsilon_2 \Vert R^{\star}\Vert_F  + \epsilon_1 \Vert T\Vert_F \\
  &= \epsilon_2 \sqrt{k r^2} + \epsilon_1 \sqrt{k}\\
  &= \sqrt{k} \left(\epsilon_1 + \epsilon_2 r \right).
\end{align*}
Therefore, 
\[ \mathbb{E}\left[ \left \Vert A^T X \right \Vert_2^2 \right]  \leq C^2 \Vert A \Vert_F^2 \leq C^2k \left( \epsilon_1 + \epsilon_2 r \right)^2.\] 
Substituting into \eqref{eq:norm-expression}, we conclude
\begin{align*}
&\int L(x,y, f^{\star} \circ (TR)^+) dF(x,y) - \int L(x,y, f^{\star} \circ Q^{\star} R^{\star}) dF(x,y) \\
&\leq 2 \eta C \sqrt{k} \left(\epsilon_1 + \epsilon_2 r \right) + C^2 k \left(\epsilon_1 + \epsilon_2 r \right)^2 = z(\epsilon_1, \epsilon_2, C). \qedhere
\end{align*}
\end{proof}

\begin{proof}[Proof of Lemma \ref{lemma:norm-integral}]
A similar result appears in \cite{Bertsimas1999}. We include the proof for completeness.
\small
\begin{align*}
&\int L(x,y, g) dF(x,y) \\
&= \int \left(g(x) - y\right)^2 dF(x,y)\\
&=  \int  \left(g(x) - (f^{\star} \circ Q^{\star} R^{\star})(x) + (f^{\star} \circ Q^{\star} R^{\star})(x) - y\right)^2 dF(x,y)\\
&=  \int  \left(g(x) - (f^{\star} \circ Q^{\star} R^{\star})(x)\right)^2 + \left((f^{\star} \circ Q^{\star} R^{\star})(x) - y\right)^2 + 2 \left(g(x) - (f^{\star} \circ Q^{\star} R^{\star})(x)\right)\left((f^{\star} \circ Q^{\star} R^{\star})(x) - y\right)  dF(x,y)\\
&=  \Vert g - f^{\star} \circ Q^{\star} R^{\star}\Vert_2^2  + \int L(x,y, f^{\star}\circ Q^{\star} R^{\star}) dF(x,y) + 2 \mathbb{E}\left[ \left(g(X) - (f^{\star} \circ Q^{\star} R^{\star})(X) \right) \left((f^{\star} \circ Q^{\star} R^{\star})(X) - Y\right)\right].
\end{align*}
\normalsize
Since $\mathbb{E}[Y|X] = (f^{\star} \circ Q^{\star} R^{\star})(x)$, we have
\[\mathbb{E}\left[ \left(g(X) - (f^{\star} \circ Q^{\star} R^{\star})(X) \right) \left((f^{\star} \circ Q^{\star} R^{\star})(X) - Y\right)\right] = 0.\]
Rearranging completes the proof.
\end{proof}

\begin{proof}[Proof of Lemma \ref{lemma:sample-accuracy-main-helper}]
Recalling that the range of any $h \in \mathcal{G}(T, \mathcal{R})$ is contained in $[0,b]$, the statement follows from Corollary 1 (pp. 45) of \cite{Haussler1995}.
\end{proof}


We now work towards a proof of Lemma \ref{lemma:net-bound}. Recall $\alpha = \alpha(\epsilon) = \frac{1}{2}\epsilon (b+\eta)^{-1}$ and let $S = \{0, \alpha, 2 \alpha, \dots,  \left(\left\lceil \frac{b}{\alpha} \right \rceil - 1\right) \alpha\}$ be a discretization of the range $[0,b]$. For $f \in \mathcal{C}(b)$, let 
\begin{align*}
g_f(x) = \max \{q \in S: q \leq f(x)\}.
\end{align*}
In other words, the function $g_f$ is formed by rounding each value down to the nearest increment in $S$. Fix $I \subset [d]$ with $|I| = s^{\star}$. Recall the definition $\mathcal{G}(T, R, I) = \{f \circ (TR)^+(I) : f \in \mathcal{C}(b)\}$, and let $\mathcal{H}(T, R, I) \triangleq \{g_f \circ (TR)^+(I) : f \in \mathcal{C}(b)\}$. Proposition \ref{proposition:net-proof} will show that the set $\mathbf{L}_{\mathcal{H}(Q, R, I)}((x_1,y_1), \dots, (x_n, y_n))$ is an $\epsilon$-net for the set $\mathbf{L}_{\mathcal{G}(Q, R, I)}((x_1,y_1), \dots, (x_n, y_n)) $. Next, we relate the cardinality of $\mathbf{L}_{\mathcal{H}(Q,R, I)}((x_1,y_1), \dots, (x_, y_n))$ to a \emph{labeling number}.
\begin{definition}[Labeling Number \cite{Gamarnik1999}]
For a sequence of points $x_1, \dots, x_n \in \mathbb{R}^k$ and a positive integer $m$, the labeling number $\Lambda(x_1, \dots, x_n; m)$ is the number of functions $\phi: \{x_1, \dots, x_n\} \to \{1, 2, \dots, m\}$ such that $\phi(x_i) \leq \phi(x_j)$ whenever $x_i \preceq x_j$ for $i, j \in \{1, \dots, n\}$.
\end{definition}
Let $M = (QR)^+$. Observe that the cardinality of $\mathbf{L}_{\mathcal{H}(Q,R, I)}((x_1,y_1), \dots, (x_, y_n))$ is upper-bounded by the labeling number of the set $\{M(I)^Tx_1, \dots, M(I)^T x_n\}$ with $\left( \left \lceil \frac{b}{\alpha} \right \rceil - 1 \right)$ labels. Therefore, the value of $N_{\mathcal{G}(Q,R,I)}\left(\epsilon, n\right)$ is upper-bounded by the \emph{expected} labeling number of the set $\{M(I)^T X_1, \dots M(I)^T X_n\}$ with $\left( \left \lceil \frac{b}{\alpha} \right \rceil - 1 \right)$ labels.

We therefore need to determine an upper bound on the expected labeling number of the set $\{M(I)^T X_1, \dots M(I)^T X_n\}$ with $\left( \left \lceil \frac{b}{\alpha} \right \rceil - 1 \right)$ labels. Let $\overline{x}(I)$ be the vector formed from the entries of $x$ that are indexed by the set $I$. We will first show that the labeling number of the set $\{M(I)^T x_1, \dots M(I)^T x_n\}$ is upper-bounded by the labeling number of the set $\{\overline{x_1}(I), \dots, \overline{x_n}(I)\}$ with the same number of labels. Observe that the points $\{\overline{x_i}(I)\}_{i=1}^n$ have dimension $s^{\star}$. We will then analyze the expected labeling number of a sequence of random variables $\{W_1, \dots, W_n\}$ that are of dimension $s^{\star}$.

The following proposition will be used to establish the net property.
\begin{proposition}\label{proposition:net-proof}
Let $T \in \mathcal{O}_{d,k}$, $R \in \overline{\mathcal{M}}_{k,k}(r)$, and $I \subset [d]$. Fix $(f \circ (TR)^+(I)) \in \mathcal{G}(T, R, I)$ and the accompanying $(g_f \circ (TR)^+) \in \mathcal{H}(T, R, I)$. Let $x \in \mathcal{X}$ and $y \in \mathcal{Y}$. Then
\begin{align*}
\left| L(x, y, f \circ (TR)^+(I)) - L(x, y, g_f \circ (TR)^+(I)) \right| \leq \epsilon.
\end{align*}
\end{proposition}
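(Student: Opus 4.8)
The plan is to reduce the statement to a one-dimensional estimate about rounding, observing that the matrices $T$, $R$ and the index set $I$ play no role beyond determining a single point. Write $z \triangleq (TR)^+(I)^T x \in \mathbb{R}^k$ and set $a \triangleq f(z) = (f \circ (TR)^+(I))(x)$ and $a' \triangleq g_f(z) = (g_f \circ (TR)^+(I))(x)$. Then $L(x,y,f\circ(TR)^+(I)) - L(x,y,g_f\circ(TR)^+(I)) = (a-y)^2 - (a'-y)^2$, so the claim is purely about the three real numbers $a$, $a'$, $y$.

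First I would establish the rounding bound $0 \le a - a' \le \alpha$. By definition $g_f(z) = \max\{q \in S : q \le f(z)\}$; since $0 \in S$ and $f(z) \in [0,b]$ this set is nonempty, so $a' \le a$. For the upper bound on $a-a'$: if $a \le (\lceil b/\alpha\rceil - 1)\alpha$, then $a'$ is the largest multiple of $\alpha$ not exceeding $a$, so $a - a' < \alpha$; otherwise $a' = (\lceil b/\alpha\rceil - 1)\alpha \ge b - \alpha \ge a - \alpha$, using $\lceil b/\alpha\rceil \ge b/\alpha$ and $a \le b$. In either case $0 \le a - a' \le \alpha$.

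Next I would factor the difference of squares: $(a-y)^2 - (a'-y)^2 = (a-a')(a+a'-2y)$, hence $|L(x,y,f\circ(TR)^+(I)) - L(x,y,g_f\circ(TR)^+(I))| = |a-a'|\cdot|a+a'-2y|$. Since $a, a' \in [0,b]$ and $y \in \mathcal{Y} = [-\eta, b+\eta]$, we get $a + a' - 2y \in [-2(b+\eta), 2(b+\eta)]$, so $|a+a'-2y| \le 2(b+\eta)$. Combining with $|a-a'|\le \alpha$ gives a bound of $2\alpha(b+\eta)$, and substituting $\alpha = \tfrac12\epsilon(b+\eta)^{-1}$ yields exactly $\epsilon$.

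There is essentially no obstacle: the only mildly delicate point is handling the top of the discretized range $S$ in the rounding bound, since the largest element of $S$ is $(\lceil b/\alpha\rceil - 1)\alpha$ rather than $b$; this is dispatched by $(\lceil b/\alpha\rceil - 1)\alpha \ge b - \alpha$. Everything else is the elementary difference-of-squares identity together with the a priori bounds $f, g_f \in [0,b]$ and $y \in [-\eta, b+\eta]$.
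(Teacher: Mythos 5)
Your proof is correct and takes essentially the same route as the paper's: factor the difference of squares, bound the rounding error by $\alpha$, bound the remaining factor by $2(b+\eta)$, and recall $\alpha = \tfrac12\epsilon(b+\eta)^{-1}$. The only difference is that you explicitly verify the rounding bound $0 \le f(z) - g_f(z) \le \alpha$, including the edge case near the top of the discretized range $S$, whereas the paper asserts this bound without comment; your extra care here is a welcome, if minor, elaboration.
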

\begin{proof}
Let $M = (TR)^+$.
\begin{align*}
&\left| L(x, y, f \circ (TR)^+(I)) - L(x, y, g_f \circ (TR)^+(I)) \right| \\
&=\left| L(x, y, f \circ M(I)) - L(x, y, g_f \circ M(I)) \right| \\
&= \left| \left( (f \circ M(I))(x) - y\right)^2 - \left( (g_f \circ M(I))(x) - y\right)^2 \right|\\
&= \left| (f \circ M(I))^2(x) - (g_f \circ M(I))^2(x) -2y \left((f \circ M(I))(x) - (g_f \circ M(I))(x) \right) \right|\\
&= \left| \left((f \circ M(I))(x) - (g_f \circ M(I))(x) \right) \left((f \circ M(I))(x) + (g_f \circ M(I))(x) -2y\right)\right|\\
&= \left| (f \circ M(I))(x) - (g_f \circ M(I))(x) \right| \cdot \left|(f \circ M(I))(x) + (g_f \circ M(I))(x) -2y\right|\\
&\leq \alpha \cdot 2(b + \eta)\\
&= \epsilon. \qedhere
\end{align*}
\end{proof}

Let $x_1, \dots, x_n \in \mathbb{R}^d$. The following result will allow us to relate the binary labeling number of the set $\{M(I) x_1, \dots M(I) x_n\}$ to the binary labeling number of the set $\{x_1(I), \dots, x_n(I)\}$.
\begin{proposition}\label{proposition:labelling-contraction} 
Let $A$ be a $d \times k$ matrix with nonnegative entries. Let $x_1, \dots, x_n \in \mathbb{R}^d$. Then for $m \geq 1$,
\[\Lambda \left( A^Tx_1, \dots, A^Tx_n; m\right) \leq \Lambda \left( x_1, \dots, x_n; m\right).\]
\end{proposition}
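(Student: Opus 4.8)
The plan is to construct an explicit injection from the set of valid $m$-labelings of the point multiset $\{A^Tx_1, \dots, A^Tx_n\}$ into the set of valid $m$-labelings of $\{x_1, \dots, x_n\}$; the inequality between labeling numbers then follows immediately by counting.

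The only structural ingredient needed is the elementary fact that the linear map $x \mapsto A^Tx$ is order-preserving for the coordinate-wise partial order, precisely because $A$ has nonnegative entries. Indeed, if $x_i \preceq x_j$ then for each coordinate $p \in [k]$ we have $(A^Tx_i)_p = \sum_{l=1}^d A_{lp} x_{il} \leq \sum_{l=1}^d A_{lp} x_{jl} = (A^Tx_j)_p$, since every $A_{lp} \geq 0$; hence $A^Tx_i \preceq A^Tx_j$. In particular $x_i = x_j$ implies $A^Tx_i = A^Tx_j$, which is what makes the construction below well defined.

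Next I would take an arbitrary order-preserving labeling $\phi \colon \{A^Tx_1, \dots, A^Tx_n\} \to \{1, \dots, m\}$ and define $\psi \colon \{x_1, \dots, x_n\} \to \{1, \dots, m\}$ by $\psi(x_i) = \phi(A^Tx_i)$. This is a well-defined function on the set $\{x_1, \dots, x_n\}$ (if $x_i = x_j$ then $A^Tx_i = A^Tx_j$, so the two prescriptions agree), and it is order-preserving: if $x_i \preceq x_j$, then $A^Tx_i \preceq A^Tx_j$ by the previous step, so $\psi(x_i) = \phi(A^Tx_i) \leq \phi(A^Tx_j) = \psi(x_j)$. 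Thus $\psi$ is a valid $m$-labeling of $\{x_1, \dots, x_n\}$.

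Finally I would verify that the assignment $\phi \mapsto \psi$ is injective: if $\phi \neq \phi'$ then they differ at some point $w \in \{A^Tx_1, \dots, A^Tx_n\}$, and picking any index $i$ with $A^Tx_i = w$ gives $\psi(x_i) = \phi(w) \neq \phi'(w) = \psi'(x_i)$, so $\psi \neq \psi'$. Hence the count of valid $m$-labelings of $\{A^Tx_1, \dots, A^Tx_n\}$ is at most that of $\{x_1, \dots, x_n\}$, which is exactly $\Lambda(A^Tx_1, \dots, A^Tx_n; m) \leq \Lambda(x_1, \dots, x_n; m)$. I do not anticipate any real obstacle: all the content sits in the nonnegativity of $A$ making $x \mapsto A^Tx$ order-preserving, and the remaining work is just the minor bookkeeping needed when some of the $x_i$ (or their images) coincide.
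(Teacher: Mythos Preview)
Your proposal is correct and follows essentially the same approach as the paper: both arguments rest on the observation that nonnegativity of $A$ makes $x \mapsto A^Tx$ order-preserving, so any labeling feasible for the transformed points pulls back to a feasible labeling of the original points. Your version is simply more careful, explicitly verifying well-definedness and injectivity of the map $\phi \mapsto \psi$ to handle possible coincidences among the $x_i$ or their images, whereas the paper leaves this bookkeeping implicit.
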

\begin{proof}
Suppose $x_i \preceq x_j$. Then also $Ax_i \preceq Ax_j$. Therefore any labeling that is feasible for the points $\{A^T x_1, \dots, A^T x_n\}$ is also feasible for the points $\{x_1, \dots, x_n\}$.
\end{proof}

We will now analyze the expected labeling number of a set of random variables $\{W_1, \dots, W_n\}$. The concept of an \emph{integer partition} is required for the labeling number analysis.
\begin{definition}[Integer Partition (as stated in \cite{Gamarnik2019})]
An integer partition of dimension $(k-1)$ with values in $\{0, 1, \dots, t\}$ is a collection of values $A_{i_1, i_2, \dots, i_{k-1}} \in \{0, 1, \dots, t\}$ where $i_l \in \{1, \dots m\}$ and $A_{i_1, i_2, \dots, i_{d-1}} \leq A_{j_1, j_2, \dots, j_{k-1}}$ whenever $i_l \leq j_l$ for all $l \in \{1, \dots, k-1\}$. The set of integer partitions of dimension $(k-1)$ with values in $\{0,1, \dots, t\}$ is denoted by $P\left([t]^k\right)$.
\end{definition}
For an illustration of a partition with $k=2$, see Figure 3 in \cite{Gamarnik2019}. The following result provides a bound on the expected labeling number of a set of points with certain distribution assumptions. It will be used to bound the expected labeling number of the set $\{X_1, \dots, X_n\}$.
\begin{lemma}\label{lemma:labeling-number-bound}
Let $m \in \mathbb{N}$ and $B > 0$. Let $W \in \mathbb{R}^d$ be a random variable with support contained in the set $[-B,B]^d$. Suppose that the density $f_W(w)$ is upper-bounded by $D$. Let $W_1, \dots, W_n$ be independent samples with distribution $f_W$. Then 
\begin{align*}
\mathbb{E}\left[\Lambda(W_1, \dots, W_n; m)\right] &\leq \exp \left[ \left( 2\log(2)(m-1) + D2^{m + 2d -1} B^d \right) n^{\frac{d-1}{d}} \right].
\end{align*}
\end{lemma}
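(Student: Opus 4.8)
The plan is to bound the expected labeling number by first relating it to the expected number of integer partitions induced by the sample points, then bounding the number of such partitions via a chaining/counting argument over a discretization of the cube $[-B,B]^d$.

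\textbf{Step 1: From labelings to partitions.} Given the points $W_1, \dots, W_n$, any feasible labeling $\phi : \{W_1, \dots, W_n\} \to \{1, \dots, m\}$ is determined by the $m-1$ ``level sets'' $L_t = \{W_i : \phi(W_i) \le t\}$ for $t = 1, \dots, m-1$, each of which is a \emph{down-set} (lower set) with respect to the coordinate-wise partial order restricted to the sample. So $\Lambda(W_1, \dots, W_n; m)$ is at most the number of chains $L_1 \subseteq L_2 \subseteq \cdots \subseteq L_{m-1}$ of down-sets of the sample poset; crudely this is at most $(\text{number of down-sets})^{m-1}$, but a tighter bound keeps the nesting. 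I would express this as: a feasible labeling corresponds to a monotone (non-decreasing along the partial order) function into $\{0, \dots, m-1\}$ on the sample, i.e. an element of $P([\,\cdot\,]^d)$ restricted to the sample coordinates.

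\textbf{Step 2: Discretize the cube.} Partition $[-B,B]^d$ into a grid of $M^d$ small sub-cubes of side $2B/M$ for a parameter $M$ to be optimized (taking $M \approx n^{1/d}$ balances the two sources of error). A monotone $\{0, \dots, m-1\}$-valued labeling of the sample is consistent with \emph{some} monotone labeling of the set of occupied grid cells, except possibly on the pairs of sample points lying in a common cell or in cells that are incomparable only because of sub-cube-scale perturbations. The number of monotone labelings of the grid cells is $|P([M]^d)|$ with $m$ values, and by the standard bound on integer partitions (as in \cite{Gamarnik2019}) this is at most $\exp(c \cdot m^{?} M^{d-1})$ — I would cite or re-derive that $|P([M]^d)|$ with values in $\{0,\dots,m-1\}$ is at most $m^{O(M^{d-1})}$, giving the $2\log(2)(m-1)$ factor once the dependence on $m$ is made explicit via $\log m$ or a union bound over the $m-1$ thresholds.

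\textbf{Step 3: Account for collisions.} The discrepancy between labelings of the sample and labelings of the grid is controlled by the number of sample points that share a grid cell (or by pairs of points whose order is ``ambiguous'' at scale $2B/M$). Each extra such point multiplies the labeling count by at most $m$ (or $2$ per threshold). The expected number of occupied-with-multiplicity cells is controlled using the density bound $f_W \le D$: the probability a given point lands in a fixed small cell is at most $D(2B/M)^d$, and a first-moment / Chernoff argument bounds the number of ``bad'' points by roughly $D (2B)^d M^{-d} \cdot n$ in expectation, or more carefully by $n^{(d-1)/d}$ after choosing $M \asymp (D(2B)^d n)^{1/d}$. Combining: the expected labeling number is at most (grid labelings) $\times$ (expected factor from collisions) $\le \exp[2\log(2)(m-1) M^{d-1}] \cdot \exp[\log(2m) \cdot (\text{bad points})]$, and plugging in the optimal $M$ yields the claimed bound $\exp[(2\log(2)(m-1) + D 2^{m+2d-1}B^d) n^{(d-1)/d}]$, where the $2^{m+2d-1}B^d$ term collects the grid-resolution constant $(2B)^{d-1}$, the $2^d$ from neighboring-cell ambiguities, and the $2^m$ from the threshold union bound.

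\textbf{Main obstacle.} The delicate part is Step 3 — making the ``collision'' bookkeeping rigorous so that both the number of grid cells and the number of exceptional sample points are simultaneously $O(n^{(d-1)/d})$ with the \emph{same} choice of $M$, and so that the multiplicative blowup from exceptional points is correctly $2^{O(m)}$ rather than $m^{O(n)}$. I expect this is handled by following the argument in \cite{Gamarnik2019} closely: bound $\mathbb{E}[\Lambda]$ by conditioning on the cell-occupancy pattern, use that within each ``slab'' the labeling is monotone in one fewer dimension, and recurse; the $n^{(d-1)/d}$ exponent is exactly the surface area of the occupied region at the optimal grid scale. The density bound $D$ and the box size $B$ enter only through the optimal $M = \Theta((D B^d n)^{1/d})$, producing the stated constant $D 2^{m+2d-1} B^d$.
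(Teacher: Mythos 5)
Your high-level plan matches the paper's: reduce to binary labelings (your ``chains of down-sets raised to the $m-1$ power'' is exactly Proposition~5 of \cite{Gamarnik2019}, $\Lambda(\cdot;m)\le\Lambda(\cdot;2)^{m-1}$), discretize $[-B,B]^d$ into a grid, count integer partitions via the Moshkovitz bound, control boundary-cell collisions using the density cap $D$, and pick the grid scale $t\asymp n^{1/d}$. Where you stall is precisely where the paper's proof has its content: your Step~3 says ``each bad point multiplies the count by at most $m$ (or $2$ per threshold)'' and then proposes conditioning on occupancy and recursing, which you flag as the delicate part you'd defer to \cite{Gamarnik2019}. The paper avoids that bookkeeping entirely. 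After the $m\to 2$ reduction, it invokes Lemma~5 of \cite{Gamarnik2019} in the clean form $\mathbb{E}[\Lambda^{m-1}]\le |P([t]^d)|^{m-1}\,\mathbb{E}[2^{(m-1)N'}]$, with the single adaptation that $N'\sim\mathrm{Binom}\bigl(n, D(2B/t)^d(t^d-(t-1)^d)\bigr)$ --- the density bound and box size enter only by rescaling the binomial success probability. Then it just evaluates the binomial moment generating function, applies $\log(1+x)\le x$ and $t^d-(t-1)^d\le(2^d-1)t^{d-1}$, and plugs in $t=n^{1/d}$. So the two ingredients you should add to close the gap are: (i) state and use $\Lambda(\cdot;m)\le\Lambda(\cdot;2)^{m-1}$ explicitly rather than reasoning about $m$-valued partitions of the grid (which is not quite the same object as $|P([t]^d)|^{m-1}$); and (ii) replace the heuristic ``Chernoff/first-moment bound on bad points'' with the exact MGF computation $\mathbb{E}[2^{(m-1)N'}]=(1-p+p\,2^{m-1})^n$, which is what produces the stated constant $D\,2^{m+2d-1}B^d$ without any recursion or union bound over thresholds.
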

\begin{proof}
Note that since the labeling number is translation-invariant, the same result applies to a random variable $W$ with support contained in $[0,2B]^d$. \cite{Gamarnik2019} considered the case $B = \frac{1}{2}$ and $D = 1$. We now adapt their proof. By Proposition 5 in \cite{Gamarnik2019},
\begin{align*}
\Lambda(w_1, \dots, w_n; m) \leq \left( \Lambda(w_1, \dots, w_n; 2)\right)^{m-1}
\end{align*}
for any $w_1, \dots, w_n \in \mathbb{R}^d$. For clarity of notation, write $\Lambda(w_1, \dots, w_n)$ in place of $\Lambda(w_1, \dots, w_n; 2)$. We therefore have
\begin{align*}
\mathbb{E}\left[\Lambda(W_1, \dots, W_n; m)\right] &\leq \mathbb{E}\left[\left(\Lambda(W_1, \dots, W_n)\right)^{m-1}\right].
\end{align*}
Let $t \in \mathbb{N}$. When $B = \frac{1}{2}$ and $D = 1$, we have by Lemma 5 of \cite{Gamarnik2019}
\begin{align*}
\mathbb{E}\left[\left(\Lambda(W_1, \dots, W_n)\right)^{m-1}\right] &\leq \left| P([t]^d)\right|^{m-1} \mathbb{E}\left[ 2^{(m-1)N}\right],
\end{align*}
where $N \sim \text{Binom}\left(n, \frac{t^d - (t-1)^d}{t^d}\right)$. The value $ \frac{t^d - (t-1)^d}{t^d}$ is the probability that a uniform random variable in $[0,1]^d$ falls in one of $t^d - (t-1)^d$ cubes out of $t^d$ cubes that partition $[0,1]^d$. To adapt the proof, we instead partition $[-B,B]^d$ into $t^d$ cubes. Each cube therefore has volume $\left(\frac{2B}{t}\right)^d$, and density upper-bounded by $D\left(\frac{2B}{t}\right)^d$. We conclude that
\begin{align}
\mathbb{E}\left[\left(\Lambda(W_1, \dots, W_n)\right)^{m-1}\right] &\leq \left| P([t]^d)\right|^{m-1} \mathbb{E}\left[ 2^{(m-1)N'}\right], \label{eq:intermediate-step}
\end{align}
where $N' \sim \text{Binom}\left(n, D\left(\frac{2B}{t}\right)^d\left(t^d - (t-1)^d\right)\right)$. Let $p = D\left(\frac{2B}{t}\right)^d\left(t^d - (t-1)^d\right)$.

\cite{Moshkovitz2014} showed that
\[\left|P([t]^d) \right| \leq \binom{2t}{t}^{t^{d-2}}.\]
We have
\begin{align*}
\mathbb{E}\left[ 2^{(m-1)N'}\right] = \mathbb{E}\left[ e^{\log(2)(m-1)N')}\right] = M_{N'}(\log(2)(m-1)),
\end{align*}
where $M_{N'}$ is the moment-generating function of the random variable $N'$. It holds that
\begin{align*}
M_{N'}(\theta) &= (1 - p + p e^{\theta})^n.
\end{align*}
Substituting into \eqref{eq:intermediate-step},
\begin{align*}
\mathbb{E}\left[\left(\Lambda(W_1, \dots, W_n)\right)^{m-1}\right] &\leq \left( \binom{2t}{t}^{t^{d-2}}\right)^{m-1} \left(1 - p + p e^{\log(2)(m-1)}\right)^n\\
&= \binom{2t}{t}^{(m-1)t^{d-2}} \left(1 - p + p 2^{(m-1)}\right)^n\\
&\leq 2^{2t(m-1)t^{d-2}} \left(1 + p 2^{(m-1)}\right)^n\\
&= \exp \left[2\log(2)(m-1)t^{d-1}  + n \log\left(1 + p 2^{(m-1)}\right) \right].
\end{align*}
Using the fact that $\log(1+x) \leq x$, we have
\begin{align*}
\mathbb{E}\left[\left(\Lambda(W_1, \dots, W_n)\right)^{m-1}\right] &\leq \exp \left[2\log(2)(m-1)t^{d-1} + n  p 2^{(m-1)} \right]\\
&= \exp \left[2\log(2)(m-1)t^{d-1} + D2^{(m-1)} \left(\frac{2B}{t}\right)^d\left(t^d - (t-1)^d\right) n \right].
\end{align*}
By the Binomial Theorem,
\begin{align*}
t^d - (t-1)^d &= t^d - \sum_{i=0}^d \binom{d}{i} t^{d-i} (-1)^i\\
& = \sum_{i=1}^d \binom{d}{i} t^{d-i} (-1)^{i+1} \\
& \leq \sum_{i=1}^d \binom{k}{i} \max_{i \in \{1, \dots, d\}} t^{d-i} (-1)^{i+1} \\
&= (2^d -1) t^{d-1}.
\end{align*}
Substituting,
\begin{align*}
\mathbb{E}\left[\left(\Lambda(W_1, \dots, W_n)\right)^{m-1}\right] &\leq \exp \left[2\log(2)(m-1)t^{d-1} + D2^{(m-1)} \left(\frac{2B}{t}\right)^d (2^d -1) t^{d-1} n \right]\\
&= \exp \left[2\log(2)(m-1)t^{d-1} + D2^{(m-1)} \left(2B\right)^d (2^d -1) t^{-1} n \right]\\
&\leq \exp \left[2\log(2)(m-1)t^{d-1} + D2^{m + 2d -1} B^d t^{-1} n \right]
\end{align*}

Let $t = n^{\frac{1}{d}}$. Substituting,
\begin{align*}
\mathbb{E}\left[\left(\Lambda(W_1, \dots, W_n)\right)^{m-1}\right] &\leq  \exp \left[2\log(2)(m-1)n^{\frac{d-1}{d}} + D2^{m + 2d -1} B^d n^{\frac{d-1}{d}} \right]\\
&= \exp \left[ \left( 2\log(2)(m-1) + D2^{m + 2d -1} B^d \right) n^{\frac{d-1}{d}} \right]. \qedhere 
\end{align*}
\end{proof}

We now prove Lemma \ref{lemma:net-bound}.
\begin{proof}[Proof of Lemma \ref{lemma:net-bound}]
Recall the definitions $\mathcal{G}(T, R, I) = \{f \circ (TR)^+(I) : f \in \mathcal{C}(b)\}$ and $\mathcal{H}(T, R, I) = \{g_f \circ (TR)^+(I) : f \in \mathcal{C}(b)\}$ for $R \in \mathcal{R}$. We have
\begin{align*}
N_{\mathcal{G}(T, \mathcal{R})}\left(\epsilon, n\right) \leq \sum_{R \in \mathcal{R}} \sum_{I \subset [d] : |I| = s^{\star}} N_{\mathcal{G}(T, R, I)}\left(\epsilon, n\right) \leq \binom{d}{s^{\star}}  |\mathcal{R}| \max_{R \in \mathcal{R}, I \subset [d] : |I| = s^{\star}} N_{\mathcal{G}(T,R,I)}\left(\epsilon, n\right).
\end{align*}
Consider an arbitrary $R \in \mathcal{R}$ and $I \subset [d]$ with $|I| = s^{\star}$. By Proposition \ref{proposition:net-proof}, the set $\mathbf{L}_{\mathcal{H}(T,R,I)}((x_1,y_1), \dots, (x_n, y_n))$ is an $\epsilon$-net for the set $\mathbf{L}_{\mathcal{G}(T,R,I)}((x_1,y_1), \dots, (x_n, y_n)) $. Let $M = (TR)^+$. Observe that the cardinality of $\mathbf{L}_{\mathcal{H}(T,R,I)}((x_1,y_1), \dots, (x_, y_n))$ is upper-bounded by the labeling number of the set $\{(M(I))^Tx_1, \dots, (M(I))^T x_n\}$ with $\left( \left \lceil \frac{b}{\alpha} \right \rceil - 1 \right)$ labels. Recall the definition of $\overline{x}(I)$, and similarly let $\overline{M}(I)$ be the matrix formed from the rows of $M$ that are indexed by the set $I$. 
For $x \in \mathbb{R}^d$, it holds that
\[ (M(I))^T x = (\overline{M}(I))^T (\overline{x}(I)).\]
Note that $\overline{x}(I)$ is an $s^{\star}$-dimensional vector. By Proposition \ref{proposition:labelling-contraction},
\[ \Lambda \left( (M(I))^Tx_1, \dots, (M(I))^T x_n; \left( \left \lceil \frac{b}{\alpha} \right \rceil - 1 \right) \right) \leq \Lambda \left( \overline{x_1}(I), \dots, \overline{x_n}(I); \left( \left \lceil \frac{b}{\alpha} \right \rceil - 1 \right) \right).\]
Therefore, the value of $N_{\mathcal{G}(T,R,I)}\left(\epsilon, n\right)$ is upper-bounded by the \emph{expected} labeling number of the set $\{\overline{X_1}(I), \dots, \overline{X_n}(I)\}$ with $\left( \left \lceil \frac{b}{\alpha} \right \rceil - 1 \right)$ labels. Applying Lemma \ref{lemma:labeling-number-bound} (setting $d = s^{\star}$, $m = \left( \left \lceil \frac{b}{\alpha} \right \rceil - 1 \right)$, $B = C$ and $D = (p^{\star })^{s^{\star}}$), 
we have
\begin{align*}
N_{\mathcal{G}(T,R,I)}\left(\epsilon, n\right)& \leq  \exp \left[ \left( 2\log(2)\left(\left \lceil \frac{b}{\alpha} \right \rceil - 2 \right) +  2^{\left \lceil \frac{b}{\alpha} \right \rceil - 2 + 2s^{\star}} (p^{\star}C)^{s^\star} \right) n^{\frac{s^{\star}-1}{s^{\star}}} \right] \\
& \leq  \exp \left[ \left( \frac{2 \log(2)b}{\alpha} +2^{\frac{b}{\alpha}} (4p^{\star}C)^{s^{\star}} \right) n^{\frac{s^{\star}-1}{s^{\star}}} \right].
\end{align*}
We conclude that
\[N_{\mathcal{G}(T, \mathcal{R})}\left(\epsilon, n\right)  \leq  \binom{d}{s^{\star}} |\mathcal{R}| \exp \left[ \left( \frac{2 \log(2)b}{\alpha} +2^{\frac{b}{\alpha}} (4p^{\star}C)^{s^{\star}} \right) n^{\frac{s^{\star}-1}{s^{\star}}} \right].\]
Recalling that $|\mathcal{R}| = N_0^k$ concludes the proof.
\end{proof}
\begin{remark}
In the proof of Lemma \ref{lemma:net-bound}, we have taken advantage of the fact that $s^{\star}$ is a constant. Another approach would be to analyze the labeling number directly in $\mathbb{R}^k$, since $k$ is also a constant. However, there is a technical hurdle to overcome. The grid approach in Lemma \ref{lemma:labeling-number-bound} works well when the distribution of the random variable is not too concentrated. Without good control over the induced distribution of $M(I)^T X$, it would be difficult to carry out a similar argument.
\end{remark}

\begin{proof}[Proof of Theorem \ref{theorem:sample-main-result}]
We show that Algorithm \ref{alg:full} achieves the desired statistical guarantee, using the bound in Theorem \ref{theorem:main-result}.
We choose \[N_0 = \left \lceil \frac{\log\left(\frac{\epsilon}{3k}\right)}{\log \left(1-   \left| S_r^{k-1} \cap B\left(e_1, \frac{\delta}{\sqrt{k}}\right)\right| \left|S_r^{k-1} \right|^{-1} \right)} \right \rceil.\]
The choice of $N_0$ leads to
\[k\left(1-   \left| S_r^{k-1} \cap B\left(e_1, \frac{\delta}{\sqrt{k}}\right)\right| \left|S_r^{k-1} \right|^{-1} \right)^{N_0} \leq \frac{\epsilon}{3}.\]
Similarly, $\frac{1}{d^2} \leq \frac{\epsilon}{3}$ by the assumption on $d$.

Before bounding the last term, we need to control the value of $\epsilon_0$. Observe that the function $z(\epsilon_1, \epsilon_2, C)$ is decreasing in both arguments. Therefore, by setting 
\[\frac{1}{\rho_0} 4 \sqrt{2} s^{\star} \lambda \leq \delta ,\]
we ensure that $\epsilon_0 \geq \frac{\epsilon}{2}$. Substituting in the value of $\lambda$ and solving,
\begin{align*}
&\frac{1}{\rho_0} 4 \sqrt{2} s^{\star} \cdot 10 \sqrt{\theta \frac{\log(d)}{n}} \leq \delta\\
&n \geq \frac{3200  s^{\star 2} \theta \log(d)}{\rho_0^2 \delta^2} .
\end{align*}
We now bound the last term:
\begin{align}
&4\binom{d}{s^{\star}} N_0^k \exp \left[ \left( \frac{2 \log(2)b}{\alpha} +2^{\frac{b}{\alpha}} (4p^{\star}C)^{s^{\star}} \right) n^{\frac{s^{\star}-1}{s^{\star}}}  -\frac{\epsilon_0^2 n}{2^9 b^2} \right] \nonumber\\
&\leq \exp \left[\log(4) + s^{\star} \log(d) + \left( \frac{2 \log(2)b}{\alpha} +2^{\frac{b}{\alpha}} (4p^{\star}C)^{s^{\star}} \right) n^{\frac{s^{\star}-1}{s^{\star}}}  -\frac{\epsilon_0^2 n}{2^9 b^2} \right]. \label{eq:bound}
\end{align}

Recall that $\alpha =  \frac{1}{64}\epsilon_0 (b+\eta)^{-1}$. For $n \geq 3200  s^{\star 2} \theta \log(d)\rho_0^{-2} \delta^{-2}$, we have $\alpha \geq  \frac{1}{128}\epsilon (b+\eta)^{-1}$. We see that there exists $t = t(N_0, C, b, s^{\star}, p^{\star}, k, \eta)$ such that if $n \geq \max \left\{t,   3200  s^{\star 2} \theta \log(d)\rho_0^{-2} \delta^{-2}\right \}$, then 
\[\log(4) + \left( \frac{2 \log(2)b}{\alpha} +2^{\frac{b}{\alpha}} (4p^{\star}C)^{s^{\star}} \right) n^{\frac{s^{\star}-1}{s^{\star}}}  -\frac{\epsilon_0^2 n}{2^9 b^2} \leq \frac{\epsilon_0^2 n}{2^{10} b^2}.\]
For such $n$, we then bound \eqref{eq:bound} by $\exp \left[s^{\star} \log(d) -\frac{\epsilon_0^2 n}{2^{10} b^2} \right]$. Setting this quantity to be less than $\frac{\epsilon}{3}$, we obtain
\begin{align*}
&\exp \left[s^{\star} \log(d) -\frac{\epsilon_0^2 n}{2^{10} b^2} \right] \leq \frac{\epsilon}{3}\\
&s^{\star} \log(d) -\frac{\epsilon_0^2 n}{2^{10} b^2} \leq\log \left( \frac{\epsilon}{3}\right)\\
&n \geq \frac{2^{10} b^2}{\epsilon_0^2}\left(s^{\star} \log(d) + \log \left(\frac{3}{\epsilon}\right) \right).
\end{align*}

Taking $n_0 = \max \left\{ 2^{12} b^2\epsilon^{-2} \left(s^{\star} \log(d) + \log \left(\frac{3}{\epsilon}\right) \right),  3200  s^{\star 2} \theta \log(d)\rho_0^{-2} \delta^{-2}, t\right\}$ completes the proof.
\end{proof}

\section{Producing a Lipschitz estimator}\label{section:Lipschitz}
We now modify Algorithm \ref{alg:integer-program} to ensure the Lipschitz property, in addition to the coordinate-wise monotone property. When we only needed to ensure monotonicity, the interpolation step was straightforward; interpolation was possible as long as the points themselves satisfied the monotonicity property. The situation is slightly more complicated for Lipschitz functions. Algorithm \ref{alg:integer-program-Lipschitz} ensures that the estimated points are \emph{interpolable} with respect to the class $\mathcal{L}_1(b)$, as defined below.
\begin{definition}
We say that a collection of points $(x_i, y_i)_{i=1}^n \in \mathbb{R}^k \times \mathbb{R}$ is \emph{interpolable} with respect to a function class $\mathcal{F}$ if there exists $f \in \mathcal{F}$ such that $f(x_i) = y_i$ for each $i \in [n]$.
\end{definition}

Algorithm \ref{alg:integer-program-Lipschitz} below finds the optimal index set and function values on a given set of points, compatible with interpolability. 
Binary variables $v_l$ determine the index set $I$. The variables $F_i$ represent the estimated function values at data points $X_i$. Auxiliary variables $z_{ij}$ and $w_{ijp}$ are used to model the monotonicity and Lipschitz constraints. \begin{breakablealgorithm}
\caption{Integer Programming Sparse Matrix Isotonic Regression (Lipschitz)}\label{alg:integer-program-Lipschitz}
\begin{algorithmic}[1]
\Require{Values $(X_i, Y_i)_{i=1}^n \in \mathbb{R}^d \times \mathbb{R}$, sparsity level $s$, $M \geq 0 \in \mathbb{R}^{d \times k}$, $C > 0$, $b > 0$}
\Ensure{An index set $I \subset [d]$ satisfying $|I| = s$; values $F_1, F_2, \dots, F_n \in [0,b]$ such that the points $(M(I)^T X_i, F_i)_{i=1}^n$ are interpolable by a coordinate-wise monotone $1$-Lipschitz function.}
\State Let $B = 2C \sum_{l=1}^d \sum_{p=1}^k M_{lp}$. 
\State Solve the following optimization problem.
\begin{align}
&\min_{v, F, z} \sum_{i=1}^n \left(Y_i - F_i\right)^2 \label{eq:objective-L}\\
\text{s.t. } &\sum_{l=1}^d v_l = s  \label{eq:sparsity-L}\\
&bz_{ij} \geq F_i - F_j & \forall i, j \in [n] \label{eq:ordering-L}\\
&(F_i - F_j)^2 \leq \sum_{p=1}^k w_{ijp} \left(\sum_{l=1}^d v_l M_{lp} (X_{il} - X_{jl})\right)^2 + (1 - z_{ij}) b^2 &\forall i, j \in [n] \label{eq:main-constraints-L}\\
&-B(1-w_{ijp}) \leq \sum_{l=1}^d v_l M_{lp} (X_{il} - X_{jl}) \leq B w_{ijp} &\forall i, j \in [n], p \in [k] \label{eq:w-constraints-L}\\
&z_{ij} \in \{0,1\} & \forall i, j \in [n] \nonumber\\
&v_l \in \{0,1\} &\forall l \in [d] \nonumber \\
&w_{ijp} \in \{0,1\} & \forall i, j \in [n], p \in [k] \nonumber\\
&F_i \in [0,b] &\forall i \in [n]  \nonumber
\end{align}
\State Return the set $I_n = \{ l \in [d] : v_l = 1\}$ and the values $F_1, F_2, \dots, F_n$.
\end{algorithmic}
\end{breakablealgorithm}
\begin{remark}
Note that Constraints \eqref{eq:main-constraints-L} contain products of three binary variables. We may encode arbitrary products of binary variables using linear constraints, as follows. Suppose $x$ and $y$ are binary variables, and we wish to encode $z = xy$. This is equivalent to the constraints $x + y - 1 \leq z \leq \frac{1}{2}(x+y)$ and $z \in \{0,1\}$. Longer products may be encoded recursively.
\end{remark}

We apply the construction of \cite{Beliakov2005} to find a coordinate-wise monotone, $1$-Lipschitz interpolation. 
\begin{proposition}\label{proposition:interpolation}
Suppose the points $(x_i, y_i)_{i=1}^n \in \mathbb{R}^k \times [0, b]$ satisfy 
\begin{align}
y_i - y_j \leq \Vert(x_i - x_j)^+ \Vert_2 \label{eq:criterion-prop}
\end{align}
for each pair $(i,j) \in [n]^2$. Let $\hat{g}(x) = \max_i \{y_i - \Vert (x_i - x)^+ \Vert_2 \}$, and let $\hat{f}(x) = \max \{\hat{g}(x), 0\}$. Then $\hat{f} \in\mathcal{L}_1(b)$. Furthermore, $\hat{f}$ interpolates the points; i.e. $\hat{f}(x_i) = y_i$ for each $i \in [n]$.
\end{proposition}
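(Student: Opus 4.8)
The plan is to recognize $\hat g$ as a McShane--Whitney-type extension adapted to the coordinate-wise monotone Lipschitz class $\mathcal{L}_1(b)$, built from the ``cone'' functions $h_i(x) \triangleq y_i - \Vert (x_i - x)^+ \Vert_2$. First I would establish that each $h_i$ is simultaneously $1$-Lipschitz and coordinate-wise monotone. For the Lipschitz property, note that the scalar truncation $t \mapsto t^+ = \max\{t,0\}$ is $1$-Lipschitz, so the componentwise map $a \mapsto a^+$ on $\mathbb{R}^k$ satisfies $\Vert a^+ - b^+ \Vert_2 \leq \Vert a - b \Vert_2$; composing with the isometry $x \mapsto x_i - x$ and the $1$-Lipschitz norm $\Vert \cdot \Vert_2$ shows $x \mapsto \Vert (x_i - x)^+ \Vert_2$ is $1$-Lipschitz, hence so is $h_i$. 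For monotonicity, if $x \preceq x'$ then $x_i - x' \preceq x_i - x$, and since $t \mapsto t^+$ is nondecreasing we get $0 \preceq (x_i - x')^+ \preceq (x_i - x)^+$ coordinate-wise, so $\Vert (x_i - x')^+ \Vert_2 \leq \Vert (x_i - x)^+ \Vert_2$ and therefore $h_i(x) \leq h_i(x')$.

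Next I would use that a pointwise maximum of finitely many $1$-Lipschitz (resp.\ coordinate-wise monotone) functions is again $1$-Lipschitz (resp.\ coordinate-wise monotone): for the first, $|\max_i h_i(x) - \max_i h_i(x')| \leq \max_i |h_i(x) - h_i(x')| \leq \Vert x - x' \Vert_2$, and for the second, $h_i(x) \leq h_i(x')$ for all $i$ forces $\max_i h_i(x) \leq \max_i h_i(x')$. Applying this to $\hat g = \max_i h_i$, and then to $\hat f = \max\{\hat g, 0\}$ (the constant $0$ being trivially $1$-Lipschitz and coordinate-wise monotone), yields that $\hat f$ is $1$-Lipschitz and coordinate-wise monotone. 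For the range, $\hat f \geq 0$ by construction, while $h_i(x) = y_i - \Vert (x_i - x)^+ \Vert_2 \leq y_i \leq b$ gives $\hat g \leq b$ and hence $\hat f \leq b$; thus $\hat f \in \mathcal{L}_1(b)$.

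Finally, for the interpolation claim I would check the two inequalities separately at a fixed data point $x_j$. The lower bound is immediate: $\hat g(x_j) \geq h_j(x_j) = y_j - \Vert (x_j - x_j)^+ \Vert_2 = y_j \geq 0$, so $\hat f(x_j) = \hat g(x_j) \geq y_j$. For the upper bound, the hypothesis \eqref{eq:criterion-prop} gives, for every $i$, $h_i(x_j) = y_i - \Vert (x_i - x_j)^+ \Vert_2 \leq y_i - (y_i - y_j) = y_j$, hence $\hat g(x_j) = \max_i h_i(x_j) \leq y_j$. Combining the two bounds gives $\hat f(x_j) = y_j$, completing the proof.

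I do not anticipate a serious obstacle: the construction is essentially the classical one-Lipschitz extension argument. The only points requiring care are confirming that the single-variable truncation $t \mapsto t^+$ is at once $1$-Lipschitz and nondecreasing (so that the cone functions $h_i$ inherit both properties, with the Lipschitz property holding in Euclidean norm), and observing that the hypothesis \eqref{eq:criterion-prop} is exactly the inequality needed to pin the extension down to $y_j$ from above at each $x_j$.
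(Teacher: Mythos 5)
Your proof is correct, and it takes a genuinely different, more self-contained route than the paper's. The paper invokes Proposition~\ref{proposition:criterion} (an external result from \cite{Beliakov2005}) as a black box: that result directly asserts that under hypothesis~\eqref{eq:criterion-prop} the function $\hat{g}(x) = \max_i \{y_i - \Vert (x_i - x)^+ \Vert_2\}$ is a coordinate-wise monotone $1$-Lipschitz interpolation, so the paper's proof reduces to observing that taking the max with the zero function preserves those two properties and that $0 \leq \hat f \leq b$. You instead reprove the substance of that cited result from first principles in the McShane--Whitney style: you verify that each cone function $h_i(x) = y_i - \Vert (x_i - x)^+ \Vert_2$ is $1$-Lipschitz (via the $1$-Lipschitzness of the coordinate-wise positive part together with the isometry $x \mapsto x_i - x$) and coordinate-wise monotone (via the componentwise inequality $(x_i - x')^+ \preceq (x_i - x)^+$ when $x \preceq x'$), then pass both properties through the finite pointwise max, and finally obtain the interpolation equalities $\hat g(x_j) = y_j$ by pinning from below with $h_j$ and from above via hypothesis~\eqref{eq:criterion-prop}. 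What the paper's route buys is brevity by delegating to \cite{Beliakov2005}; what your route buys is transparency and independence from the external reference, at the modest cost of spelling out the cone-function verification. Both handle the range constraint $[0,b]$ the same way. One incidental benefit of your version is that it avoids the notational friction in the paper's proof, where the symbols $\hat f$ and $\hat g$ from Proposition~\ref{proposition:criterion} and from the statement being proved do not align and are used somewhat interchangeably.
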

The proof follows from \cite{Beliakov2005}. We therefore obtain the following approach for interpolation.
\begin{breakablealgorithm}
\caption{Monotone Lipschitz Interpolation}\label{alg:interpolation}
\begin{algorithmic}[1]
\Require{Points $(x_i, y_i)_{i=1}^n \in \mathbb{R}^k \times [0, b]$ satisfying \eqref{eq:criterion-prop} for each $i,j$.}
\Ensure{An estimated function $\hat{f} \in \mathcal{L}_1(b)$ that interpolates the points.}
\State Let $\hat{g}(x) = \max_i \{y_i - \Vert (x_i - x)^+ \Vert_2 \}$. Return $\hat{f}(x) = \max \{\hat{g}(x), 0\}$.
\end{algorithmic}
\end{breakablealgorithm}

\begin{breakablealgorithm}
\caption{}\label{alg:Lipschitz-estimation}
\begin{algorithmic}[1]
\Require{Values $(X_i, Y_i)_{i=1}^n \in \mathbb{R}^d \times \mathbb{R}$, sparsity level $s$, $M \geq 0 \in \mathbb{R}^{d \times k}$, $C > 0$, $b > 0$}
\Ensure{$I_n \in [d] : |I_n| = s^{\star}$ and $f_n \in \mathcal{L}_1(b)$}
\State Apply Algorithm \ref{alg:integer-program-Lipschitz} to input $(X_i, Y_i)_{i=n+1}^{2n}$, $s^{\star}$, $M$, $C$, and $b$, obtaining the index set $I$ and values $F_1, \dots, F_n$. 
\State Apply Algorithm \ref{alg:interpolation} to input $(M (I)X_{n+i}, F_i)_{i=1}^{n}$, obtaining the function $\hat{f}$.
\State Return $(I, \hat{f})$.
\end{algorithmic}
\end{breakablealgorithm}

\begin{proposition}\label{proposition:optimal-Lipschitz}
Suppose $X_i \in [-C, C]^d$ for $i \in [n]$. On input $(X_i, Y_i)_{i=1}^n, s, M, C, b$, Algorithm \ref{alg:Lipschitz-estimation} finds a function $\hat{f}_n \in \mathcal{L}_1(b)$ and index set $I_n$ that minimize the empirical loss $\sum_{i=1}^n L(X_i, Y_i, f \circ M(I))$, over functions $f \in \mathcal{L}_1(b)$ and index sets $I$ with cardinality $s$. 
\end{proposition}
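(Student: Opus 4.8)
The plan is a two-stage argument. First I will show that the integer program in Algorithm~\ref{alg:integer-program-Lipschitz}, run on the $n$ supplied samples, returns a pair $(I,(F_i)_{i=1}^n)$ with $|I|=s$, $F_i\in[0,b]$, minimizing $\sum_{i=1}^n(Y_i-F_i)^2$ over all such pairs for which the points $(M(I)^TX_i,F_i)_{i=1}^n$ are interpolable by some $f\in\mathcal{L}_1(b)$. Then I will use Proposition~\ref{proposition:interpolation} to turn these optimal values into an optimal function and close the loop.

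The key preliminary observation is an exact characterization of interpolability: a collection $(x_i,y_i)_{i=1}^n$ with $y_i\in[0,b]$ is interpolable by $\mathcal{L}_1(b)$ if and only if $y_i-y_j\le\Vert(x_i-x_j)^+\Vert_2$ for every pair $i,j$. Sufficiency is exactly Proposition~\ref{proposition:interpolation}. For necessity, if $f\in\mathcal{L}_1(b)$ interpolates, then writing $x_i\vee x_j$ for the coordinate-wise maximum, monotonicity gives $f(x_i)\le f(x_i\vee x_j)$, and the Lipschitz property gives $f(x_i\vee x_j)-f(x_j)\le\Vert x_i\vee x_j-x_j\Vert_2=\Vert(x_i-x_j)^+\Vert_2$; adding, $y_i-y_j\le\Vert(x_i-x_j)^+\Vert_2$, and $y_i=f(x_i)\in[0,b]$ since $f$ has range $[0,b]$.

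Next I will analyze the feasible region of the integer program. Write $(u_i-u_j)_p=\sum_{l=1}^d v_lM_{lp}(X_{il}-X_{jl})$, the $p$-th coordinate of $M(I)^TX_i-M(I)^TX_j$ where $I=\{l:v_l=1\}$; since $|X_{il}-X_{jl}|\le 2C$, $M\ge0$ and $v_l\in\{0,1\}$, we have $|(u_i-u_j)_p|\le B=2C\sum_{l,p}M_{lp}$. Constraints~\eqref{eq:w-constraints-L} then force $w_{ijp}=1$ whenever $(u_i-u_j)_p>0$ and $w_{ijp}=0$ whenever $(u_i-u_j)_p<0$ (leaving it free, harmlessly, when the coordinate is zero), so that in every feasible point $\sum_p w_{ijp}(u_i-u_j)_p^2=\Vert(u_i-u_j)^+\Vert_2^2$. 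A case split on the sign of $F_i-F_j$ identifies the rest: if $F_i>F_j$, Constraint~\eqref{eq:ordering-L} forces $z_{ij}=1$ and Constraint~\eqref{eq:main-constraints-L} reduces to $(F_i-F_j)^2\le\Vert(u_i-u_j)^+\Vert_2^2$, i.e. $F_i-F_j\le\Vert(u_i-u_j)^+\Vert_2$; if $F_i\le F_j$ this inequality holds trivially, one may take $z_{ij}=0$, and Constraint~\eqref{eq:main-constraints-L} is slack because $(F_i-F_j)^2\le b^2$. Conversely, any $(v,F)$ with $v\in\{0,1\}^d$, $\sum_l v_l=s$, $F\in[0,b]^n$ and $F_i-F_j\le\Vert(M(I)^TX_i-M(I)^TX_j)^+\Vert_2$ for all $i,j$ extends to a feasible point by setting $w_{ijp}=\mathbbm{1}[(u_i-u_j)_p>0]$ and $z_{ij}=\mathbbm{1}[F_i>F_j]$. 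Hence the feasible region, projected to $(v,F)$, is exactly the set of index sets of size $s$ together with value vectors in $[0,b]^n$ whose graph over the points $M(I)^TX_i$ satisfies criterion~\eqref{eq:criterion-prop}; since the objective depends only on $F$, the integer program solves precisely the minimization described in the first stage.

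It remains to combine. Let $(I,(F_i)_i)$ be the integer-program output; by the characterization its points are interpolable, so Algorithm~\ref{alg:interpolation} (via Proposition~\ref{proposition:interpolation}) yields $\hat f_n\in\mathcal{L}_1(b)$ with $(\hat f_n\circ M(I))(X_i)=\hat f_n(M(I)^TX_i)=F_i$, so the empirical loss of $(\hat f_n,I)$ equals the program's optimum $\sum_i(Y_i-F_i)^2$. For any competitor $f\in\mathcal{L}_1(b)$ and index set $I'$ with $|I'|=s$, the values $y_i'=f(M(I')^TX_i)\in[0,b]$ satisfy \eqref{eq:criterion-prop} by the necessity direction, so $(I',(y_i')_i)$ is feasible for the program and $\sum_i L(X_i,Y_i,f\circ M(I'))=\sum_i(Y_i-y_i')^2$ is at least the optimum. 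Therefore $(\hat f_n,I)$ minimizes the empirical loss over $\mathcal{L}_1(b)$ and index sets of cardinality $s$, as claimed. I expect the only real work to be in the third step: checking that the big-$M$ constants $B$ and $b$ are large enough to make every intended implication tight across all sign patterns of the coordinate differences $(u_i-u_j)_p$ and all orderings of $F_i,F_j$, including the degenerate cases of exact ties; the remainder is a routine reduction to Proposition~\ref{proposition:interpolation}.
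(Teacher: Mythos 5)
Your proof is correct and follows essentially the same route as the paper's: unpack the feasible region of the integer program, show that the constraints encode (via the $z_{ij}$ and $w_{ijp}$ indicators) exactly the Beliakov interpolability criterion $F_i - F_j \le \Vert(M(I)^TX_i - M(I)^TX_j)^+\Vert_2$, and then invoke Proposition~\ref{proposition:interpolation} to turn the optimal $(I,F)$ into an $\mathcal{L}_1(b)$ function with matching empirical loss. Two minor expository differences worth noting: you derive the necessity direction of the interpolability criterion directly from monotonicity plus Lipschitz using $x_i \vee x_j$, whereas the paper simply cites the full ``if and only if'' of Proposition~\ref{proposition:criterion} (from \cite{Beliakov2005}); and you make explicit the ``any competitor $(f, I')$ yields a feasible point with no smaller objective'' step, which the paper leaves implicit in the line ``the objective minimizes the loss on the samples.'' Your version is arguably the cleaner write-up, but the underlying argument is the same.
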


We now modify Algorithm \ref{alg:full} to include the Lipschitz assumption.
\begin{breakablealgorithm}
\caption{MMI Regression (Lipschitz)}\label{alg:full-Lipschitz}
\begin{algorithmic}[1]
\Require{$N_0 \in \mathbb{N}$, values $(X_1, Y_1), \dots, (X_N, Y_N)$, $C > 0$, $b > 0$, $\tau > 0$, and $\lambda > 0$}
\Ensure{$f_n \in \mathcal{L}_1(b)$, $Q_n \in \mathcal{O}_{d,k}$, $\overline{R}_n \in \overline{\mathcal{M}}_{k,k}(r)$, and $I_n \in [d] : |I_n| = s^{\star}$}
\State Construct a random near-net $\mathcal{R}(N_0)$.
\State Produce an estimate $Q_n$ using Algorithm \ref{alg:SDP} applied to $(X_i, Y_i)_{i=1}^n$, $\tau$, and $\lambda$.
\ForEach {$R \in \mathcal{R}$} 
\State Let $M= (Q_nR)^+$. Apply Algorithm \ref{alg:Lipschitz-estimation} to input $(X_i, Y_i)_{i=n+1}^n \in \mathbb{R}^d \times \mathbb{R}$, $s^{\star}$, $M$, $C$, and $b$, obtaining the index set $I_R$ and function $f_R$.
\EndFor
\State Return the tuple $(f_R, Q_n, R, I_R)$ with the smallest empirical loss.
\end{algorithmic}
\end{breakablealgorithm}
The proof of Theorem \ref{theorem:main-result} carries through exactly for Algorithm \ref{alg:full-Lipschitz}, since $\mathcal{L}_1(b) \subset \mathcal{C}(b)$. We note that a tighter analysis of the estimation error incurred by using Algorithm \ref{alg:full-Lipschitz} would take advantage of the Lipschitz property of the estimated function.

\subsection{Proofs}
In order to prove Proposition \ref{proposition:interpolation}, we need to know when a collection of points is interpolable by a coordinate-wise monotone and $1$-Lipschitz function. The following result provides a necessary and sufficient condition for interpolability.
\begin{proposition}[From Proposition 3.3 and Proposition 4.1 in \cite{Beliakov2005}]\label{proposition:criterion}
A collection of points $(x_i, y_i)_{i=1}^n \in \mathbb{R}^k \times \mathbb{R}$ 
is interpolable with respect to the class of coordinate-wise monotone and $1$-Lipschitz functions if and only if
\begin{align*}
y_i - y_j \leq \Vert (x_i - x_j)^+ \Vert_2 
\end{align*}
for all $i, j \in [n]$. Further, if the collection is interpolable, then the function
\[\hat{f}(x) = \max_i \{y_i - \Vert (x_i - x)^+ \Vert_2 \]
is an interpolation that is coordinate-wise monotone and $1$-Lipschitz.
\end{proposition}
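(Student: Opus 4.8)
The plan is to prove the two implications separately; in the "if" direction I would simply use the explicit candidate $\hat f(x) = \max_i \{ y_i - \Vert (x_i - x)^+ \Vert_2 \}$ supplied in the statement. For the "only if" direction, suppose $f$ is coordinate-wise monotone, $1$-Lipschitz, and $f(x_i) = y_i$ for all $i$. Fix an ordered pair $(i,j)$ and introduce the auxiliary point $u$ with coordinates $u_l = \max\{x_{i,l}, x_{j,l}\}$. Since $u \succeq x_i$ componentwise, monotonicity gives $f(u) \ge f(x_i) = y_i$; since $f$ is $1$-Lipschitz, $f(x_j) \ge f(u) - \Vert u - x_j \Vert_2$; and $u - x_j$ has $l$-th coordinate $\max\{x_{i,l} - x_{j,l}, 0\}$, so $\Vert u - x_j \Vert_2 = \Vert (x_i - x_j)^+ \Vert_2$. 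Chaining these inequalities gives $y_j \ge y_i - \Vert (x_i - x_j)^+ \Vert_2$, i.e. $y_i - y_j \le \Vert (x_i - x_j)^+ \Vert_2$, and running over all ordered pairs produces every required instance.

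For the "if" direction, assuming $y_i - y_j \le \Vert (x_i - x_j)^+ \Vert_2$ for all $i,j$, I would verify three properties of $\hat f$. First, \emph{interpolation}: evaluating at $x = x_j$, the $i=j$ summand equals $y_j$ while every other summand is $y_i - \Vert (x_i - x_j)^+ \Vert_2 \le y_j$ by hypothesis, so $\hat f(x_j) = y_j$. Second, \emph{monotonicity}: if $x \preceq x'$, then for each $i$ and each coordinate $l$ we have $(x_{i,l} - x'_l)^+ \le (x_{i,l} - x_l)^+$ (as $t \mapsto \max\{t,0\}$ is non-decreasing), hence $\Vert (x_i - x')^+ \Vert_2 \le \Vert (x_i - x)^+ \Vert_2$; taking the maximum over $i$ yields $\hat f(x') \ge \hat f(x)$. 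Third, \emph{$1$-Lipschitz}: since $t \mapsto \max\{t,0\}$ is $1$-Lipschitz on $\mathbb{R}$, the map $v \mapsto v^+$ is non-expansive in $\Vert \cdot \Vert_2$, so composing with the isometry $x \mapsto x_i - x$ shows each $x \mapsto y_i - \Vert (x_i - x)^+ \Vert_2$ is $1$-Lipschitz; and a pointwise maximum of finitely many $1$-Lipschitz functions is $1$-Lipschitz, so $\hat f$ is as well.

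I do not anticipate a substantial obstacle — this is essentially a repackaging of the cited results of \cite{Beliakov2005}. The only two ideas that must be spotted are the choice of the coordinate-wise maximum as the auxiliary point in the "only if" direction (so that the monotonicity comparison and the Lipschitz bound can be applied back-to-back and the residual distance collapses exactly to $\Vert (x_i - x_j)^+ \Vert_2$), and the non-expansiveness of the componentwise positive part, which drives the Lipschitz verification in the "if" direction. One should also keep in mind that the function class here is unconstrained in range, so — unlike in Proposition~\ref{proposition:interpolation} — no truncation at $0$ or $b$ is needed at this stage.
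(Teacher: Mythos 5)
Your proof is correct. The paper itself supplies no proof of this proposition; it is imported verbatim from \cite{Beliakov2005} (their Propositions~3.3 and~4.1), so you are providing a derivation that the paper merely cites. Your two key moves are exactly the right ones: in the ``only if'' direction, passing through the coordinate-wise maximum $u$ with $u_l = \max\{x_{i,l}, x_{j,l}\}$ is what lets the monotonicity comparison $f(u) \geq f(x_i) = y_i$ and the Lipschitz bound $f(x_j) \geq f(u) - \Vert u - x_j \Vert_2$ chain together, with $\Vert u - x_j \Vert_2 = \Vert (x_i - x_j)^+ \Vert_2$ collapsing the residual exactly; and in the ``if'' direction, the non-expansiveness of $v \mapsto v^+$ with respect to $\Vert \cdot \Vert_2$, together with the reverse triangle inequality and the fact that a pointwise maximum of finitely many $1$-Lipschitz, coordinate-wise monotone functions retains both properties, completes the verification. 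You are also correct that no truncation of the range to $[0,b]$ is needed at this stage --- that is handled separately in Proposition~\ref{proposition:interpolation}.
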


\begin{proof}[Proof of Proposition \ref{proposition:interpolation}]
By Proposition \ref{proposition:criterion},
the data admits an interpolation by a coordinate-wise monotone $1$-Lipschitz function. Further, the function $\hat{f}$ interpolates the data, and is coordinate-wise monotone and $1$-Lipschitz. Since $y_i \geq 0$ for all $i$, the function $\hat{g}$ interpolates the data also. The zero function is $1$-Lipschitz and coordinate-wise monotone. Therefore, $\hat{g}$, which is the pointwise maximum of two $1$-Lipschitz and coordinate-wise monotone functions, is itself $1$-Lipschitz and coordinate-wise monotone.

It remains to show that $0 \leq \hat{g}(x) \leq b$ for all $x$. Clearly $\hat{g}(x) \geq 0$ for all $x$. Since $y_i \leq b$ for all $i$, it holds that 
\[\hat{f}(x) \leq \max_i \{b - \Vert (x_i - x)^+ \Vert_2\} \leq b \implies \hat{g}(x) \leq b.\]
\end{proof}

\begin{proof}[Proof of Proposition \ref{proposition:optimal-Lipschitz}]
First we show that Algorithm \ref{alg:integer-program-Lipschitz} finds an index set $I$ and values $F_1, \dots, F_n \in [0,b]$ minimizing $\sum_{i=1}^n (Y_i - F_i)^2$, such that the points $(M(I)^TX_i, F_i)_{i=1}^n$ are interpolable by a coordinate-wise monotone $1$-Lipschitz function. Later, we will show that the points are in fact interpolable by a coordinate-wise monotone $1$-Lipschitz function with range $[0,b]$, a more restrictive requirement.

Let $I = \{i : v_i =1\}$. Constraint \eqref{eq:sparsity-L} ensures that exactly $s$ of the $v_l$ variables are set to $1$, so that $|I| = s$. Given this index set, we will show that the points $(M(I)^TX_i, F_i)_{i=1}^n$ are interpolable by a coordinate-wise monotone $1$-Lipschitz function. By Proposition \ref{proposition:criterion}, this is equivalent to 
\[F_i - F_j \leq \Vert \left(M(I)^T X_i - M(I)^T X_j\right)^+ \Vert_2,\]
for each $i,j$. First observe that this is equivalent to either (a) $F_i \leq F_j$ or \[\text{(b)~~}(F_i - F_j)^2 \leq \Vert \left(M(I)^T X_i - M(I)^T X_j\right)^+ \Vert_2^2.\]
We now show how the constraints encode this condition. First suppose $F_i > F_j$. Then by Constraint \eqref{eq:ordering-L}, $z_{ij} = 1$, and
\begin{align*}
&F_i - F_j \leq \Vert \left(M(I)^T X_i - M(I)^T X_j\right)^+ \Vert_2 \iff (F_i - F_j)^2 \leq \Vert \left(M(I)^T X_i - M(I)^T X_j\right)^+ \Vert_2^2\\
&\iff (F_i - F_j)^2 \leq \Vert \left(M(I)^T X_i - M(I)^T X_j\right)^+ \Vert_2^2 + (1-z_{ij})b^2.
\end{align*}
Next suppose $F_i \leq F_j$. Then $z_{ij}$ is free to equal $0$, so that 
\[(F_i - F_j)^2 \leq \Vert \left(M(I)^T X_i - M(I)^T X_j\right)^+ \Vert_2^2 + (1-z_{ij})b^2.\]
We conclude that the points $(M(I)^TX_i, F_i)_{i=1}^n$ are interpolable by a coordinate-wise monotone $1$-Lipschitz function if and only if 
\begin{align}
(F_i - F_j)^2 \leq \Vert \left(M(I)^T X_i - M(I)^T X_j\right)^+ \Vert_2^2 + (1-z_{ij})b^2. \label{eq:condition}
\end{align}
for each pair $(i,j)$, where $z_{ij} = 1$ if $F_i > F_j$. Expanding the right hand side of \eqref{eq:condition},
\begin{align*}
\Vert \left(M(I)^T X_i - M(I)^T X_j\right)^+ \Vert_2^2 &= \sum_{p=1}^k \left( \sum_{l=1}^d (M(I)^T)_{pl} (X_{il} - X_{jl})\right)^{+,2}\\
&= \sum_{p=1}^k \left( \sum_{l=1}^d v_l M_{lp} (X_{il} - X_{jl})\right)^{+,2}.
\end{align*}
Now, Constraints \eqref{eq:w-constraints-L} ensure that 
\begin{align*}
w_{ijp} &= \begin{cases}
1 & \text{if }  \sum_{l=1}^d v_l M_{lp} (X_{il} - X_{jl}) > 0\\
0 & \text{if }  \sum_{l=1}^d v_l M_{lp} (X_{il} - X_{jl}) < 0.
\end{cases}
\end{align*}

Therefore,
\begin{align*}
\Vert \left(M(I)^T X_i - M(I)^T X_j\right)^+ \Vert_2^2 &= \sum_{p=1}^k w_{ijp} \left( \sum_{l=1}^d v_l M_{lp} (X_{il} - X_{jl})\right)^{2}.
\end{align*}
We conclude that the points $(M(I)^TX_i, F_i)_{i=1}^n$ are interpolable by a coordinate-wise monotone $1$-Lipschitz function if and only if 
\begin{align*}
(F_i - F_j)^2 \leq \sum_{p=1}^k w_{ijp} \left( \sum_{l=1}^d v_l M_{lp} (X_{il} - X_{jl})\right)^2 + (1-z_{ij})b^2,
\end{align*}
for each $(i,j)$, which is exactly Constraints \eqref{eq:main-constraints-L}.

The objective \eqref{eq:objective-L} minimizes the loss on the samples. Finally, the interpolation step produces a coordinate-wise monotone Lipschitz function with range $[0,b]$, by Proposition \ref{proposition:interpolation}.
 \end{proof}

\end{document}